\documentclass[12pt]{elsarticle}
\usepackage{epstopdf}
\usepackage[caption=false]{subfig}
\usepackage{multirow}
\usepackage{color}
\usepackage{booktabs}
\usepackage{float}
\usepackage{graphicx}
\usepackage[utf8]{inputenc}
\usepackage{amsmath, amssymb, amsthm}
\usepackage{geometry}
\usepackage{abstract}
\theoremstyle{plain}
\newtheorem{theorem}{Theorem}[section]
\newtheorem{lemma}[theorem]{Lemma}

\newtheorem{proposition}[theorem]{Proposition}
\theoremstyle{definition}
\newtheorem{definition}[theorem]{Definition}

\theoremstyle{remark}
\newtheorem{remark}{Remark}

\providecommand{\keywords}[1]{\textbf{\textit{Keywords:}} #1}
\title{Spectral analysis and sine-transform based preconditioning for a structure-preserving stabilized scheme approximating the space-fractional Allen–Cahn equation with logarithmic potential}
\author[a]{Danyal Ahmad}
\author[b,c]{Stefano Serra-Capizzano}
\author[d]{Muhammad Sohaib}
\author[e]{Cristina~Tablino-Possio}

\address[a]{Faculty of Basic Sciences, GIK Institute of Engineering Sciences \& Technology, Topi, 23640 Swabi, Khyber Pakhtunkhwa, Pakistan}
\address[b]{Dipartimento di Scienza e Alta Tecnologia, Università dell'Insubria, Via Valleggio 11, 22100 Como, Italy}
\address[c]{Department of Information Technology, Uppsala University, Uppsala, Sweden}
\address[d]{Department of Mathematics and Statistics, Bacha Khan University, Charsadda 24461, Pakistan}
\address[e]{Department of Mathematics and Applications, University of Milano - Bicocca, Italy}
\date{22-5-2026}
\begin{document}

\maketitle

\begin{abstract}
We consider an initial boundary value problem of the space fractional Allen-Cahn equation with logarithmic Flory-Huggins potential. As an approximation technique, first-order weighted and shifted Gr\"unwald difference formulae of the left and right fractional derivatives are used.
The main focus of the present work is to study the spectral features of the underlying matrices and matrix-sequences and to design proper preconditioners based on the spectral information. Then a computational and spectral analysis of the resulting preconditioned matrix-sequences is performed. Numerical evidence and a short list of open problems complete the current study.
\end{abstract}
\keywords{Reaction-diffusion equation, space fractional Allen-Cahn equation, logarithmic potential function, Toeplitz matrix, generating function, spectral analysis, Krylov preconditioning}.
\section{Introduction}
Phase-field models form a foundation for the mathematical modelling of interfacial behaviour and phase-separation dynamics in materials science, fluid mechanics, and other applied fields \cite{provatas2010phasefield,ambrosio2008gradient,du2004phase}. The Allen-Cahn equation \cite{allen1979microscopic} is a prototypical model in the category of gradient-flow models. The equation is derived from the minimization of a free-energy functional and is widely used to study the dynamics of phase interfaces under curvature effects. Over the past few years, there has been increasing interest in the notion of fractional extensions of the Allen-Cahn framework, where the standard Laplacian is replaced by a nonlocal fractional operator to allow the inclusion of phenomena of anomalous diffusion and long-range interactions that cannot be modeled by an operator of integer order.

The space-fractional version of the Allen-Cahn equation that includes the Riesz fractional derivative has become a powerful theory to study phase separation in heterogeneous materials that have strong nonlocal interactions. However, the addition of nonlocal operators poses significant challenges in the analytical and numerical complexity, particularly when used to find the free energies of realistic physical systems like the logarithmic Flory-Huggins potentials. When carefully managed, these logarithmic potentials are essential to recover the thermodynamics of phase separation in polymer mixtures and binary mixtures; however, they are highly nonlinear and singular in the immediate vicinity of pure phases, and therefore, developing methods that ensure stability and adherence to maximum-principle behavior is challenging.

To address these issues, recent work has focused on structure-preserving numerical algorithms capable of conserving critical physical invariants, including energy dissipation, maximum principles, and unconditional stability. Zhang and Yang \cite{zhang2025efficient} presented an adaptive, unconditional, maximum-principle-preserving, and energy-stabilizing method for the space-fractional Allen-Cahn (SFAC) equation by treating the logarithmic potential explicitly. Sohaib et al. \cite{sohaib2024sfac} implemented the spectral method to study the effect of fractional order on the interface profile and overall solution dynamics utilizing the SFAC model. Wang and Chen \cite{wang2025finite} studied the SFAC equation with a regularized logarithmic potential function using the finite element method. For temporal discretization, they used the backward Euler's method and proved that the proposed scheme satisfies the discrete energy dissipation property. Chen and Sun \cite{chen2021dimensional} exercised an exponential time differencing Runge-Kutta method for the multidimensional SFAC equation. They used the dimensional splitting approach and computed the matrix exponential associated with only one-dimensional discretized matrices having a Toeplitz structure. Several other techniques also exist in the the literature for numerically solving SFAC equation. The interested reader can find the relevant material in the references therein \cite{he2020spatial,hu2024convergence,cui2022effective,zhang2023adaptive}

In the current work, first we approximate the SFAC equation with logarithmic Flory-Huggins potential by employing first-order weighted and shifted Gr\"unwald difference formulae of the left and right fractional derivatives. Subsequently, we study the spectral features of the underlying matrices and matrix-sequences and we propose proper $\tau$ preconditioners \cite{bini1983spectral} based on the spectral information. Then a computational and spectral analysis of the resulting preconditioned matrix-sequences is performed. In particular, the convergence speed is optimal as the cost per iteration. Furthermore, the numerical experiments demonstrate the effectiveness of the proposed computational framework in capturing the complex dynamics of the SFAC equation. These experiments reveal the significant influence of the fractional order on interface evolution, and phase separation. Moreover, the simulations preserve important physical characteristics, including energy dissipation and boundedness of the solution as shown in the reported figures, thereby confirming the physical consistency of the proposed approach.

Our work is organized as follows. In Section \ref{sec:pb}, we describe the formulation of the problem and in Section \ref{sec:appr} we approximated the given problem by emphasizing the matrix structures arising in the discretization process. Section \ref{sect:symb} is devoted to a structural and spectral analysis of the underlying matrices with emphasis on spectral localization and distribution. Section \ref{sec:tau_preconditioner} contains a short description of the proposed preconditioners with a study of the clustering features of the associated matrix-sequences. Numerical experiments are given in Section \ref{sec:num} and are critically discussed. In the final Section \ref{sec:end}, concluding remarks and open problems are reported.

\section{Problem Definition}\label{sec:pb}
We consider the following initial boundary value problem of the SFAC equation with logarithmic Flory-Huggins potential \cite{zhang2025efficient}

\begin{equation}\label{eq:SFAC}
\begin{cases}
\frac{\partial u}{\partial t}=\epsilon^{2}\mathcal{L}_{\alpha}u-f(u), & x\in\Omega, t\in(0,T], \\
u(x,0)=u_{0}(x), & x\in\Omega, \\
u|_{\partial\Omega}=0, & t\in [0, T],
\end{cases}
\end{equation}
where $\Omega=(a,b)^{d}$ is the bounded domain in $\mathbb{R}^{d}$ ($d=1,2$), $\epsilon$ is a positive constant, and $\mathcal{L}_{\alpha}$, $\alpha \in (1,2)$ is the Riesz fractional derivative operator defined by
\begin{equation}\label{eq:Riesz fractional operator}
\mathcal{L}_{\alpha}u := \frac{\partial^{\alpha}}{\partial|x|^{\alpha}}u = -\frac{1}{2\cos(\frac{\alpha\pi}{2})}\left({}_{a}D_{x}^{\alpha}u(x,t) + {}_{x}D_{b}^{\alpha}u(x,t)\right),
\end{equation}
where ${}_{a}D_{x}^{\alpha}u$ and ${}_{x}D_{b}^{\alpha}u$ represent the left and right Riemann-Liouville fractional derivatives; see \cite{tian2015class} and references therein.

The nonlinear term $f(u)$ is given by the derivative of the logarithmic Flory-Huggins potential $F(u)$
\begin{equation}\label{eq: derivative of logarithmic Flory-Huggins potential}
f(u) = F^{\prime}(u) = \frac{\theta}{2} \ln\left(\frac{1+u}{1-u}\right) - \theta_{c}u,
\end{equation}
where
\begin{equation}\label{eq:logarithmic Flory-Huggins potential}
F(u) = \frac{\theta}{2} [(1+u)\ln(1+u) + (1-u)\ln(1-u)] - \frac{\theta_{c}}{2}u^{2}, \quad 0<\theta<\theta_{c}.
\end{equation}
The graph of the logarithmic potential function and its first order derivative for $\theta_{c} = 1$ and different values of $\theta$ is given in Fig. \ref{fig:Potential_term}. For smaller values of $\theta$ i.e., when $\theta = 0.2, 0.4, 0.6$, $F(\phi)$ has a double-well structure having two symmetric minima at $\phi = \phi_{\alpha}<0$ and $\phi = \phi_{\beta}>0$. These minimas represent stable equilibrium phases, whereas $\phi = 0$ is an unstable equilibrium. This characteristic is observed in the right panel of the same figure by the multiple zeros of $F'(\phi)$ in which the central zero indicates the unstable phase and outer zeros correspond to the stable phases. For larger values of $\theta$, the width of the wells in $F(\phi)$ decreases and the distance between the equilibrium points becomes smaller. When $\theta = 1.0$, $F(\phi)$ losses its double-well structure and becomes convex, having only a single minimum at $\phi = 0$. Accordingly, $F'(\phi)$ becomes monotonic with the origin as a unique zero, representing no phase separation and hence a homogeneous mixed state.
\begin{figure}[ht!]
\centering
\subfloat[\label{subfig:figa_potential}]{\includegraphics[width=0.5\linewidth]{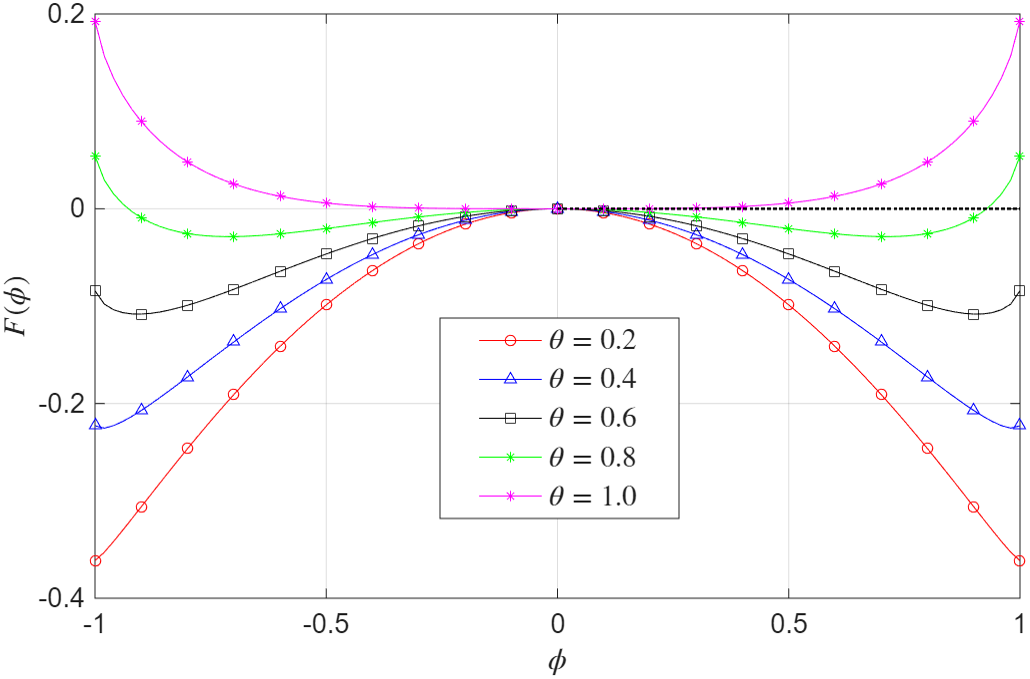}}\hfil
\subfloat[\label{subfig:figb_potential}]{\includegraphics[width=0.5\linewidth]{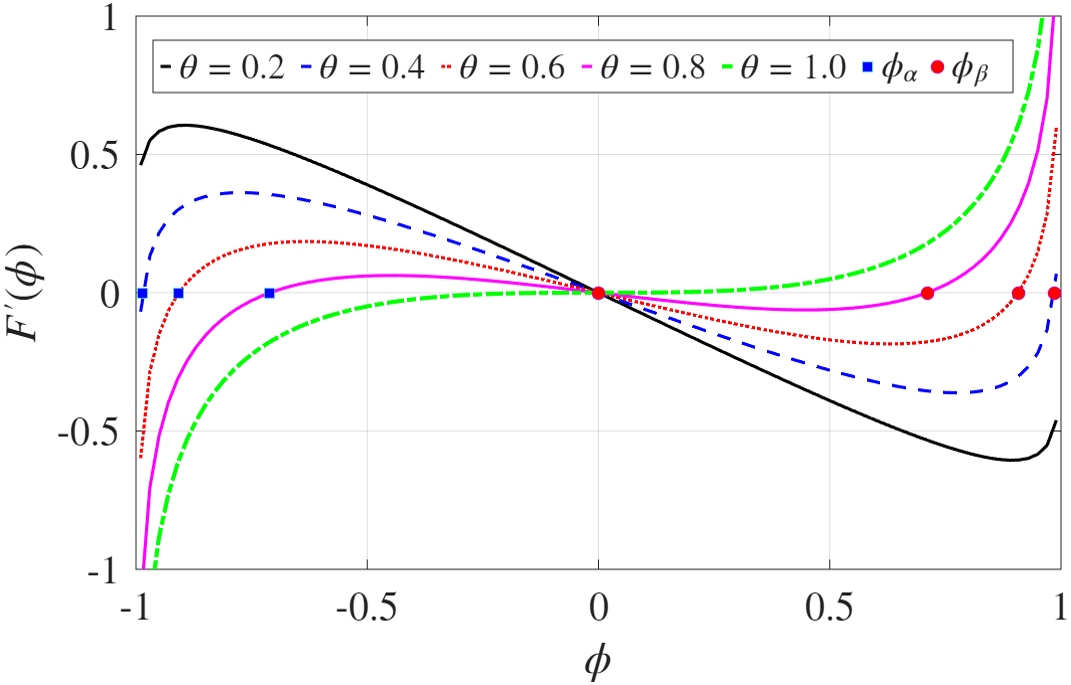}}
\caption{(a) Profile of the logarithmic energy potential $F(\phi)$ when $\theta_{c} = 1$ showing the transition from double well structure to single well as $\theta$ increases (b) Derivative of the logarithmic potential function $F'(\phi)$ for different values of $\theta$ representing stable and unstable equilibrium states.}
\label{fig:Potential_term}
\end{figure}

\section{Numerical Descretization}\label{sec:appr}
We begin reporting the formal definition of the Riemann-Liouville fractional derivatives and their properties following \cite{tian2015class,li2016high}.
\begin{definition}\label{def:RL}
For $\alpha\in (n-1,n)$, the left and right Riemann-Liouville
fractional derivatives of the function $u(x)$ on $[a, b]$ are defined as
    \begin{align}\label{eq:RL}
\begin{split}
    	_{a}{{D}_{x}^{\alpha}}u(x)&=\frac{1}{\Gamma(n-\alpha)}\frac{d^{n}}{dx^{n}}\int_{a}^{x}\frac{u(\xi)}{(\,x-\xi)\,^{\alpha-n+1}}d{\xi},\\
	_{x}{{D}_{b}^{\alpha}}u(x)&=\frac{1}{\Gamma(n-\alpha)}\frac{d^{n}}{dx^{n}}\int_{x}^{b}\frac{u(\xi)}{(\,\xi-x)\,^{\alpha-n+1}}d{\xi}.
\end{split}
\end{align}
\end{definition}
\begin{theorem}\label{Theorem1}
\cite{li2016high} Let $u(x)\in L^{1}(\mathbb{R})$, $_{-\infty}{\mathbb{\boldsymbol{D}}_{x}^{\alpha+2}}u(x)$, $_{x}{\mathbb{\boldsymbol{D}}_{\infty}^{\alpha+2}}u(x)$ and its Fourier transform belong to $L^{1}(\mathbb{R})$.
We define the left and right weighted and shifted Gr\"unwald difference ($\mbox{WSGD}$) operator by
	\begin{equation}\label{eq:aWSGO}
		{_{L}{D}_{h,p,q}^{\alpha}}u(x)=\frac{\alpha-2q}{2(p-q)}	{\mathcal{A}_{h,p}^{\alpha}}u(x)+\frac{2p-\alpha}{2(p-q)}	{\mathcal{A}_{h,q}^{\alpha}}u(x),
	\end{equation}
		and
	\begin{equation}\label{eq:bWSGO}
		{_{R}{D}_{h,p,q}^{\alpha}}u(x)=	\frac{\alpha-2q}{2(p-q)}{\mathcal{B}_{h,p}^{\alpha}}u(x)+\frac{2p-\alpha}{2(p-q)}{\mathcal{B}_{h,q}^{\alpha}}u(x),
	\end{equation}
 then we have
\begin{align*}
    \begin{split}
                {_{L}{D}_{h,p,q}^{\alpha}}u(x)&=_{-\infty}{\mathbb{\boldsymbol{D}}_{x}^{\alpha}}u(x)+O(\,h^{2})\,,\\
       {_{R}{D}_{h,p,q}^{\alpha}}u(x)&=_{x}{\mathbb{\boldsymbol{D}}_{\infty}^{\alpha}}u(x)+O(\,h^{2})\,,
    \end{split}
\end{align*}
uniformly for $x\in\mathbb{R}$, where $p,~q$ are integers and $p\neq q$. where the operators ${\mathcal{A}_{h,p}^{\alpha}}u(x)$ and ${\mathcal{B}_{h,p}^{\alpha}}u(x)$ are the first-order WSGD approximations of the left and right fractional derivatives in \eqref{eq:RL}.
\end{theorem}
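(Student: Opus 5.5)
The natural route is Fourier analysis, following the standard argument for shifted Grünwald formulas. First I fix the definitions that the statement uses implicitly. The shifted Grünwald operators are
\[
\mathcal{A}_{h,p}^{\alpha}u(x)=h^{-\alpha}\sum_{k=0}^{\infty}g_k^{(\alpha)}u(x-(k-p)h),\qquad \mathcal{B}_{h,p}^{\alpha}u(x)=h^{-\alpha}\sum_{k=0}^{\infty}g_k^{(\alpha)}u(x+(k-p)h),
\]
where the coefficients $g_k^{(\alpha)}=(-1)^k\binom{\alpha}{k}$ are those of $(1-z)^{\alpha}$. Since $u\in L^1(\mathbb{R})$ and $\sum_k|g_k^{(\alpha)}|<\infty$ for $\alpha>0$, these series converge absolutely. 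Their Fourier transforms can therefore be computed termwise, with the convention $\hat u(\omega)=\int u(x)e^{-i\omega x}\,dx$. This gives
\[
\widehat{\mathcal{A}_{h,p}^{\alpha}u}(\omega)=h^{-\alpha}e^{ip\omega h}(1-e^{-i\omega h})^{\alpha}\hat u(\omega)=(i\omega)^{\alpha}W_p(i\omega h)\hat u(\omega),
\]
where
\[
W_p(z)=\Big(\tfrac{1-e^{-z}}{z}\Big)^{\alpha}e^{pz}.
\]
I also use the known identity $\widehat{{}_{-\infty}D_x^{\alpha}u}(\omega)=(i\omega)^{\alpha}\hat u(\omega)$.

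Next I expand $W_p$ at $z=0$. Since $\big(\tfrac{1-e^{-z}}{z}\big)^{\alpha}=1-\tfrac{\alpha}{2}z+O(z^2)$, we get
\[
W_p(z)=1+\big(p-\tfrac{\alpha}{2}\big)z+O(z^2).
\]
The weights $\lambda_p=\frac{\alpha-2q}{2(p-q)}$ and $\lambda_q=\frac{2p-\alpha}{2(p-q)}$ satisfy $\lambda_p+\lambda_q=1$ and $\lambda_p(p-\alpha/2)+\lambda_q(q-\alpha/2)=0$; this is a direct check. Hence
\[
\lambda_pW_p(z)+\lambda_qW_q(z)=1+O(z^2).
\]
It follows that the Fourier transform of the error ${}_LD_{h,p,q}^{\alpha}u-{}_{-\infty}D_x^{\alpha}u$ is
\[
\hat\phi(\omega)=(i\omega)^{\alpha}\big(\lambda_pW_p(i\omega h)+\lambda_qW_q(i\omega h)-1\big)\hat u(\omega).
\]

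The main obstacle is turning the local expansion $O(z^2)$ into a bound that holds uniformly for all real $\omega$. My plan is to show that $|\lambda_pW_p(i\xi)+\lambda_qW_q(i\xi)-1|\le C|\xi|^2$ for all $\xi\in\mathbb{R}$. Near $\xi=0$ this is the Taylor remainder, since $W_p$ is analytic in a neighbourhood of the imaginary axis close to $0$. For $|\xi|\ge1$, $|W_p(i\xi)|=|1-e^{-i\xi}|^{\alpha}/|\xi|^{\alpha}\le 2^{\alpha}$ is bounded, so the left side is bounded by a constant, which is at most $C|\xi|^2$ there. Consequently
\[
|\hat\phi(\omega)|\le Ch^2|\omega|^{\alpha+2}|\hat u(\omega)|.
\]
The right-hand side is integrable because the hypothesis gives $\mathcal{F}[{}_{-\infty}D_x^{\alpha+2}u]\in L^1$.

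Finally, Fourier inversion gives
\[
|\phi(x)|\le\frac{1}{2\pi}\|\hat\phi\|_{L^1}\le Ch^2\|(i\omega)^{\alpha+2}\hat u\|_{L^1},
\]
uniformly in $x$. The right operator is handled in the same way, with $e^{-ip\omega h}(1-e^{i\omega h})^{\alpha}$ and $(-i\omega)^{\alpha}$ replacing their left-sided counterparts, and the same weights. This completes the plan.
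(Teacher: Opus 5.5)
Your proposal is correct and is essentially the standard proof. The paper does not prove this theorem itself but quotes it from \cite{li2016high} (going back to \cite{tian2015class}), where it is obtained as you do: write $\widehat{\mathcal{A}_{h,p}^{\alpha}u}(\omega)=(i\omega)^{\alpha}W_p(i\omega h)\hat u(\omega)$, use the weights to cancel the linear term in $W_p(z)=1+(p-\alpha/2)z+O(z^2)$, bound $|\hat\phi(\omega)|\le Ch^{2}|\omega|^{\alpha+2}|\hat u(\omega)|$, and conclude by Fourier inversion. Your split into $|\xi|\le 1$ (Taylor remainder, legitimate since $(1-e^{-z})/z$ is analytic and nonvanishing for $|z|<2\pi$) and $|\xi|\ge 1$ (where $|W_p(i\xi)|\le 2^{\alpha}$) is exactly what makes that global bound rigorous.
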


More precisely, let $h_{x}={(b-a)}/{(M+1)}$ represents the space step and let $\Omega_{h}=\{x_{i}|x_{i}=a+ih_{x}, 0\le i\le M+1\}$  the spatial grid partition. The first-order WSGD approximation are given by
\begin{align*}
{{}_{a}D_{x}^{\alpha}u(x_{i},t)} & = \mathcal{A}_{h,p}^{\alpha}u(x) + O(h_{x}) = {\frac{1}{h_{x}^{\alpha}} \sum_{k=0}^{i+1} g_{k}^{(\alpha)}u(x_{i-k+1},t) + O(h_{x})}, \\
{{}_{x}D_{b}^{\alpha}u(x_{i},t)} & = \mathcal{B}_{h,p}^{\alpha}u(x) + O(h_{x}) = {\frac{1}{h_{x}^{\alpha}} \sum_{k=0}^{M-i+1} g_{k}^{(\alpha)}u(x_{i+k-1},t) + O(h_{x})},
\end{align*}
where $g_k^{(\alpha)}$ are the standard Gr\"{u}nwald-Letnikov coefficients defined as
\begin{equation}
    g_0^{(\alpha)} = 1, \quad g_k^{(\alpha)} = \left( 1 - \frac{\alpha+1}{k} \right) g_{k-1}^{(\alpha)}, \quad k=1, 2, \dots.
\end{equation}

Notice that for $(p,q)=(0,-1)$, the approximation turns out to be unstable in the case of time-dependent problems, thus we considered the two sets $(p,q)=(1,0)$ and $(1,-1)$ \cite{tian2015class}.
\par
Therefore, in our setting, the compact form of \eqref{eq:aWSGO}-\eqref{eq:bWSGO} for Riemann-Liouville fractional derivatives on $x_{i}$ are

\begin{equation}
{}_{a}D_{x}^{\alpha}u(x_{i},t) = \frac{1}{h_{x}^{\alpha}} \sum_{k=0}^{i+1} \omega_{k}^{(\alpha)}u(x_{i-k+1},t) + O(h_{x}^{2}) \tag{2.3}
\end{equation}
\begin{equation}
{}_{x}D_{b}^{\alpha}u(x_{i},t) = \frac{1}{h_{x}^{\alpha}} \sum_{k=0}^{M-i+1} \omega_{k}^{(\alpha)}u(x_{i+k-1},t) + O(h_{x}^{2}) \tag{2.4}
\end{equation}
where the weight coefficients $\omega_{k}^{(\alpha)}$ satisfy
\begin{equation}
    \begin{split}
        (p,q)&=(1,0), \quad w_k^{(\alpha)} =
    \begin{cases}
    \frac{\alpha}{2} g_0^{(\alpha)}, & k = 0, \\
    \frac{\alpha}{2} g_k^{(\alpha)} + \frac{2-\alpha}{2} g_{k-1}^{(\alpha)}, & k \geq 1,
    \end{cases}\\
        (p,q)&=(1,-1),\quad w_k^{(\alpha)} =
    \begin{cases}
    \frac{\alpha+2}{4} g_k^{(\alpha)}, & k = 0,1, \\
    \frac{\alpha+2}{4} g_k^{(\alpha)} + \frac{2-\alpha}{4} g_{k-2}^{(\alpha)}, & k \geq 2.
    \end{cases}
    \end{split}
\end{equation}
To sum up, the discrete Riesz fractional operator is approximated as
\begin{equation}
\mathcal{L}_{\alpha}u(x_{i},t) = -\frac{1}{2h_{x}^{\alpha}\cos(\frac{\alpha\pi}{2})} \left[ \sum_{k=0}^{i+1} \omega_{k}^{(\alpha)}u(x_{i-k+1},t) + \sum_{k=0}^{M-i+1} \omega_{k}^{(\alpha)}u(x_{i+k-1},t) \right] \tag{2.7}
\end{equation}
Now, let $h_t={T}/{N}$ represents the time step and let $\Omega_{h_t}=\{t_{n}|t_{n}=n h_t, 0\le n\le N\}$ the time grid partition. Using a first-order method for time and adding a stabilized term $s(u^{n+1}-u^{n})$ with $s>0$, the fully discrete scheme is
\begin{equation}
\frac{u^{n+1}-u^{n}}{dt} + s(u^{n+1}-u^{n}) = \epsilon^{2}\mathcal{L}_{\alpha}u^{n+1} - f(u^{n}), \quad 0 \le n \le N-1 \tag{2.12}
\end{equation}
In matrix form, this is expressed as
\begin{equation}\label{eq:matrix_form}
\left((1+sdt)I - \epsilon^{2}dt \mathcal{T}^{(\alpha)}_{M}\right) u^{n+1} = (1+sdt)u^{n} - dt f(u^{n})
\end{equation}
where
\[ T_{\alpha,M}=\overline{c}_{\alpha} G_{\alpha,M} \]
with
\begin{align}
\overline{c}_{\alpha} & =-\left({2h_{x}^{\alpha}\cos({\alpha\pi}/{2})}\right)^{-1},\\
        G_{\alpha,M} & ={\mathcal{G}_{\alpha,M}+\mathcal{G}_{\alpha,M}^{T}},\label{eq:BM} \\
\mathcal{G}_{\alpha,M} & = \begin{bmatrix}
\omega_1^{(\alpha)} & \omega_0^{(\alpha)} & 0& \cdots & 0 \\
\omega_2^{(\alpha)} & \omega_1^{(\alpha)} & \omega_0^{(\alpha)}& \cdots &0 \\
\vdots & \ddots & \ddots & \ddots & \vdots \\
\vdots& \ddots & \ddots & \ddots &\omega_0^{(\alpha)}\\
\omega_{M}^{(\alpha)} & \omega_{M-1}^{(\alpha)} & \cdots &\omega_2^{(\alpha)} & \omega_1^{(\alpha)}
\end{bmatrix}. \label{eq:Toeplitz matrix}
\end{align}
	By defining
	\begin{align}\label{eq:Coe_Matrix_form_1D}
		A_{\alpha,M}=\left(1+s h_t\right)I_{M} - \epsilon^{2} h_t{T}_{\alpha,M}=\left(1+s h_t\right)I_{M} - \epsilon^{2}h_t \overline{c}_{\alpha}{G}_{\alpha,M},
	\end{align}
the final linear system of \eqref{eq:matrix_form} becomes
\begin{equation}\label{eq:Final_Linearsystem}
A_{\alpha,M} u^{(n+1)} = (1+s h_t)u^{(n)} - h_t f(u^{(n)}).
\end{equation}
The following Lemma \ref{Lemma1} provides useful properties of the weight coefficients allowing to classify the Toeplitz matrix symbol in the next section..
\begin{lemma}\label{Lemma1}
\cite{tian2015class} For $\alpha \in (1,2)$, it holds
\begin{enumerate}
    \item if $(p,q)=(1,0)$, we have
    \begin{align*}
    	    &\omega_{0}^{(\alpha)}=\frac{\alpha}{2},~\omega_{1}^{(\alpha)}=\frac{2-\alpha-\alpha^2}{2}<0,~\omega_{2}^{(\alpha)}=\frac{\alpha\left(\alpha^2+\alpha-4\right)}{4},\\
            & 1\geq{\omega_{0}^{(\alpha)}\geq\omega_{3}^{(\alpha)}\geq\cdots\geq0},\\
            &\sum_{k=0}^{m}{\omega_{k}^{(\alpha)}}<0,~m\geq2, \quad\sum_{k=0}^{\infty}{\omega_{k}^{(\alpha)}}=0.
\end{align*}	
    \item if $(p,q)=(1,-1)$, we have
    \begin{align*}
    	    &\omega_{0}^{(\alpha)}=\frac{\alpha+2}{4},~\omega_{1}^{(\alpha)}=-\frac{\alpha\left(2+\alpha\right)}{4}<0,~\omega_{2}^{(\alpha)}=\frac{\left(\alpha^3+\alpha^2-4\alpha+4\right)}{8}>0,\\
            &\omega_{3}^{(\alpha)}=\frac{\alpha\left(2-\alpha\right)\left(\alpha^2+\alpha-8\right)}{6}<0,\\
            & 1\geq{\omega_{0}^{(\alpha)}\geq\omega_{2}^{(\alpha)}\geq\omega_{4}^{(\alpha)}\geq\cdots\geq0},\\
            &\sum_{k=0}^{m}{\omega_{k}^{(\alpha)}}<0,~m=1~\mbox{or}~ m\geq3, \quad\sum_{k=0}^{\infty}{\omega_{k}^{(\alpha)}}=0.
\end{align*}	
\end{enumerate}
\end{lemma}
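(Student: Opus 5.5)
The whole lemma reduces to classical facts about the Gr\"unwald--Letnikov coefficients, which I would collect first. By definition the $g_k^{(\alpha)}$ are the Taylor coefficients of the generating function $(1-z)^\alpha=\sum_{k\ge0}g_k^{(\alpha)}z^k$. From the recursion, $g_0^{(\alpha)}=1$, $g_1^{(\alpha)}=-\alpha$ and $g_2^{(\alpha)}=\alpha(\alpha-1)/2$. For $k\ge2$ the factor $1-(\alpha+1)/k$ lies in $[0,1)$ when $\alpha\in(1,2)$, so $g_k^{(\alpha)}\ge 0$ and $g_k^{(\alpha)}$ is nonincreasing in $k$ for $k\ge 2$. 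Since $\alpha>1$, the series converges absolutely at $z=1$, so $\sum_k g_k^{(\alpha)}=0$.

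For the partial sums $S_m:=\sum_{k=0}^m g_k^{(\alpha)}$ I would divide the generating function by $1-z$. This gives $S_m=g_m^{(\alpha-1)}$, the coefficients of $(1-z)^{\alpha-1}$ with $\alpha-1\in(0,1)$. Hence $S_0=1$ and $S_m<0$ for all $m\ge1$: indeed $g_1^{(\alpha-1)}=1-\alpha<0$, and each later factor $1-\alpha/k$ is positive.

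Next I would write the weights as shifted combinations of the $g$'s, so that their generating functions are $\big(\tfrac{\alpha}{2}+\tfrac{2-\alpha}{2}z\big)(1-z)^\alpha$ and $\big(\tfrac{\alpha+2}{4}+\tfrac{2-\alpha}{4}z^2\big)(1-z)^\alpha$ in the two cases. Evaluating at $z=1$ gives $\sum_k\omega_k^{(\alpha)}=0$ in both cases. The partial sums are
\[
\textstyle\sum_{k\le m}\omega_k^{(\alpha)}=\tfrac{\alpha}{2}S_m+\tfrac{2-\alpha}{2}S_{m-1}\quad\text{and}\quad \tfrac{\alpha+2}{4}S_m+\tfrac{2-\alpha}{4}S_{m-2},
\]
with $S_{-1}=S_{-2}=0$. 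Both coefficients are positive, so negativity of the partial sums follows from $S_j<0$ for $j\ge1$. In case (1) this requires $m\ge2$. In case (2) it requires $m\ge3$, and additionally $m=1$, where only the term $S_1<0$ survives. The case $m=2$ is excluded because it contains $S_0=1>0$, which matches the statement.

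The explicit values $\omega_0,\omega_1,\omega_2$ (and $\omega_3$ in case (2)) come from substituting $g_0,\dots,g_3$ and simplifying. The signs then follow from elementary factorizations:
\begin{itemize}
\item $2-\alpha-\alpha^2=(1-\alpha)(2+\alpha)<0$;
\item $\alpha^2+\alpha-8<0$ on $(1,2)$;
\item $\alpha^3+\alpha^2-4\alpha+4>0$, since it equals $2$ at $\alpha=1$ and its derivative $3\alpha^2+2\alpha-4$ is positive on $[1,2]$.
\end{itemize}

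The monotone chains are the only step needing some care. In case (1), for $k\ge3$, $\omega_k^{(\alpha)}$ is a convex combination of $g_k^{(\alpha)}$ and $g_{k-1}^{(\alpha)}$, both with index $\ge2$. Hence these weights are nonnegative and nonincreasing in $k$. The first link holds because $\omega_3^{(\alpha)}\le g_2^{(\alpha)}=\alpha(\alpha-1)/2<\alpha/2=\omega_0^{(\alpha)}$.

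In case (2) only even indices are ordered. For $j\ge2$, $\omega_{2j}^{(\alpha)}$ is a positive combination, with coefficients summing to $1$, of $g_{2j}^{(\alpha)}$ and $g_{2j-2}^{(\alpha)}$. This gives nonnegativity and monotonicity in $j$. The link $\omega_2^{(\alpha)}\ge\omega_4^{(\alpha)}$ follows by comparing term by term, using $g_2\ge g_4$ and $g_0=1\ge g_2$. The link $\omega_0^{(\alpha)}\ge\omega_2^{(\alpha)}$ is a direct polynomial check: $2(\alpha+2)\ge\alpha^3+\alpha^2-4\alpha+4$ reduces to $\alpha(\alpha^2+\alpha-6)\le0$, i.e.\ $\alpha\le2$. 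Finally, $\omega_0^{(\alpha)}\le1$ is immediate in both cases.

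I expect the main obstacle to be identifying the partial sums $S_m$ as the coefficients $g_m^{(\alpha-1)}$. Once that identity is in place, every remaining claim is bookkeeping with convex combinations.
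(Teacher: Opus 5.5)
The paper gives no proof of this lemma and only cites \cite{tian2015class}. There the weights are treated as you treat them: each $\omega_k^{(\alpha)}$ is a positive combination of shifted Gr\"unwald coefficients, combined with the classical properties $g_2^{(\alpha)}\ge g_3^{(\alpha)}\ge\cdots\ge0$, $\sum_{k\le m}g_k^{(\alpha)}<0$ for $m\ge1$, and $\sum_k g_k^{(\alpha)}=0$. Your identity $S_m=g_m^{(\alpha-1)}$ is a clean, self-contained way to obtain the partial-sum property. The remaining bookkeeping checks out: the partial sums of $\omega$ with $S_{-1}=S_{-2}=0$, both monotone chains, and the sign factorizations. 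Two points need correction.

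\textbf{The value of $\omega_3^{(\alpha)}$ in case (2) does not follow from substitution, because the stated formula is wrong.} With $g_1^{(\alpha)}=-\alpha$ and $g_3^{(\alpha)}=\alpha(\alpha-1)(2-\alpha)/6$ one gets
\[
\omega_3^{(\alpha)}=\frac{\alpha+2}{4}\,g_3^{(\alpha)}+\frac{2-\alpha}{4}\,g_1^{(\alpha)}=\frac{\alpha(2-\alpha)}{4}\cdot\frac{(\alpha+2)(\alpha-1)-6}{6}=\frac{\alpha(2-\alpha)\left(\alpha^2+\alpha-8\right)}{24}.
\]
At $\alpha=1.5$ this gives $-0.1328$, while the displayed formula with denominator $6$ gives $-0.5313$. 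So the statement has a factor-$4$ typo, and writing that the values come from substituting and simplifying hides it. You should carry out this computation explicitly and record the correct constant. The claim $\omega_3^{(\alpha)}<0$ is unaffected, since it depends only on $\alpha(2-\alpha)>0$ and $\alpha^2+\alpha-8<0$.

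\textbf{A small slip in the recursion factors.} The factor $1-(\alpha+1)/k$ is not in $[0,1)$ for $k=2$: there it equals $(1-\alpha)/2<0$. It lies in $(0,1)$ only for $k\ge3$. Your conclusion $g_2^{(\alpha)}\ge g_3^{(\alpha)}\ge\cdots\ge0$ is still correct. However, positivity of $g_2^{(\alpha)}$ must come from its explicit value $\alpha(\alpha-1)/2$, and the factor argument should start at $k=3$.

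A side remark: reading the case (2) chain over even indices is the only sensible interpretation, since $\omega_3^{(\alpha)}<0$. Your argument in fact shows more, namely that $\omega_k^{(\alpha)}$ is nonnegative and nonincreasing for all $k\ge4$.
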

For two-dimensional case, let us fix $M_1,M_2,N\in\mathbb{N}$, $\textbf{m} = (M_1,M_2)$, $M=M_{1}M_{2}$ and discretize the domain $\Omega\times[0,T]$ with
\begin{equation*}\label{3D_GRID}
\begin{array}{lllll}
x_{i_1}=a_1+ih_{x_{1}},		&& h_{x_{1}}=\displaystyle \frac{b_1-a_1}{M_1+1}, 		&& i_1=0,...,M_1+1,  	\\
x_{i_2}=a_2+ih_{x_{2}},	    && h_{x_{2}}=\displaystyle \frac{b_2-a_2}{M_2+1},  		&& i_2=0,...,M_2+1.  	
\end{array}
\end{equation*}
The coefficient matrix is defined as
	\begin{align}\label{eq:Coe_Matrix_form_2D}
		{{A}_{\boldsymbol{\alpha},\mathbf{m}}}=\left(1+s h_t\right)I_{\mathbf{m}} - \epsilon^{2} h_t T_{\boldsymbol{\alpha},\mathbf{m}},
	\end{align}
 where $T_{\boldsymbol{\alpha},\mathbf{m}}$, $\boldsymbol{\alpha} = (\alpha_1,\alpha_2)$, denotes the twolevel Toeplitz matrix defined as
\begin{equation}\label{eq:BTTB_matrix}
T_{\boldsymbol{\alpha},\mathbf{m}}=  \overline{c}_{\alpha_1} T_{\alpha_1,M_{1}} \otimes I_{M_2}
+
I_{M_1} \otimes \overline{c}_{\alpha_2} T_{\alpha_2,M_{2}}.
\end{equation}
The final linear system becomes
\begin{equation}\label{eq:Final_Linearsystem_2d}
{{A}_{\boldsymbol{\alpha},\mathbf{m}}} u^{(n+1)} = (1+sh_t)u^{(n)} - h_t f(u^{(n)}).
\end{equation}
\section{ Spectral analysis of the coefficient matrix}\label{sect:symb}
	This section is devoted to the study of the spectral properties of the coefficient matrix-sequence $\{A_{\alpha,M}\}_{M\in\mathbb{N}}$, which is a well-known Toeplitz sequence. We then determine its generating function and study its spectral distribution using spectral tools for Toeplitz sequences. To this aim, let us first introduce some basic definitions and results related to the generating function of a Toeplitz sequence.
    \subsection{Notation and terminology}
	\begin{definition}\label{def3}
	\cite{chan1991toeplitz} Let  $T_{M}\in\mathbb{C}^{M\times M}$ be the Toeplitz matrix of the form
	\begin{equation}\label{eq:Toeplitz_M}
	    T_{M}=\begin{bmatrix}
		& b_{0} & b_{-1} & b_{-2} & \cdots & \cdots & b_{1-M} &\\
		& b_{1} & b_{0} & b_{-1} & \cdots & \cdots & b_{2-M} &\\
		& \vdots & \ddots & \ddots & \ddots &\ddots & \vdots&\\
		& \vdots & \ddots & \ddots &\ddots &\ddots & \vdots&\\
		& b_{M-2} & \ddots & \ddots & \ddots & \ddots & b_{-1} &\\
		& b_{M-1} & b_{M-2} &\cdots & \cdots & b_{1} & b_{0} &
	\end{bmatrix}
	\end{equation}	
	with \begin{equation}\label{eq:fourier_coef}
		b_{k}=\frac{1}{2\pi}\int_{-\pi}^{\pi}{f(x){\rm{e}}^{-\iota kx}dx},\quad\iota^{2}=-1,~~k\in\mathbb{Z},
	\end{equation}
	the Fourier coefficients of a function $f\in L^1(-\pi,\pi)$.
	Then the Toeplitz sequence $\left\{{T_{M}}\right\}_{M\in \mathbb{N}}$ 
	is called the sequence of Toeplitz matrices generated by $f$ and the matrix $T_{M}$ in \eqref{eq:Toeplitz_M} is denoted by $T_{M}\left(f\right)$. The function $f$ is called the generating function, both of the whole sequence of matrices and of the single matrix $T_{M}\left(f\right)$.
	\end{definition}
	
	Note that given a Toeplitz matrix $T_{M}$ as in \eqref{eq:Toeplitz_M}, in order to have a generating function associated with the Toeplitz sequence, we need that there exists $f\in L^1(-\pi,\pi)$ for which the relationship (\ref{eq:fourier_coef}) holds for every $k\in\mathbb{Z}$. In the case where the partial Fourier sum
	\[
	\sum_{k=-M+1}^{M-1}{b_{k}{\rm{e}}^{\iota kx}}
	\]
	converges to $f$ when $M\to \infty$ in infinity norm, then $f$ is a continuous $2\pi$ periodic function given the Banach structure of this space. A sufficient condition is that $\sum_{k=-\infty}^{\infty}\vert{b_{k}}\vert<\infty$, i.e., the generating function belongs to the Wiener class, which is a closed sub-algebra of the continuous $2\pi$ periodic functions.
\begin{definition}\label{def5}
	Let $f:D\rightarrow\mathbb{C}$ be a measurable function defined over a measurable set $D\subset \mathbb{R}^d$, $d\ge 1$, having positive and finite measure $\mu_d(D)\in (0,\infty)$. Let $\mathcal{C}_{0}(\mathbb{K})$  be the set of continuous functions with compact support over $\mathbb{K}\in\{\mathbb{C},\mathbb{R}\}$ and let $\{A_{M}\}_{M\in\mathbb{N}}$ be a sequence of matrices of size $M$ where the eigenvalues of $A_M$ are denoted by $\mu_{j}(A_{M}),~j=1,2,\cdots,M$ and the singular values of $A_M$ are denoted by $\sigma_{j}(A_{M}),~j=1,2,\cdots,M$.
	
We say that $\{A_{M}\}_{M\in\mathbb{N}}$ is distributed as the pair $\left(f,D\right)$ in the sense of the eigenvalues, and write
    \begin{align*}
        \{A_{M}\}_{M\in\mathbb{N}}\sim_\lambda\left(f,D\right),
    \end{align*}
    if the following relation holds for all $F\in\mathcal{C}_{0}(\mathbb{C})$:
    \begin{align}\label{eq:Symbol_def}
        \lim_{M\rightarrow\infty}\frac{1}{M}\sum_{j=1}^{M}F\left(\mu_{j}(A_{M})\right)=\frac{1}{\mu_d(D)}\int_{D} F(f(t))dt.
    \end{align}
    Thus, we write that $f$ is the (spectral) symbol of the matrix-sequence $\{A_{M}\}_{M\in\mathbb{N}}$.

We say that $\{A_{M}\}_{M\in\mathbb{N}}$ is distributed as the pair $\left(f,D\right)$ in the sense of the singular values, and write
    \begin{align*}
        \{A_{M}\}_{M\in\mathbb{N}}\sim_\sigma\left(f,D\right),
    \end{align*}
    if the following relation holds for all $F\in\mathcal{C}_{0}(\mathbb{R})$:
    \begin{align}\label{eq:sv-Symbol_def}
        \lim_{M\rightarrow\infty}\frac{1}{M}\sum_{j=1}^{M}F\left(\sigma_{j}(A_{M})\right)=\frac{1}{\mu_d(D)}\int_{D} F(|f(t)|)dt.
    \end{align}
    Thus, we write that $f$ is the singular value symbol of the matrix-sequence $\{A_{M}\}_{M\in\mathbb{N}}$.
\end{definition}

\begin{remark}\label{remark2}
When $f$ is continuous and $D=[a,b]$, an informal interpretation of \eqref{eq:Symbol_def} is that when the matrix size is sufficiently large, the eigenvalues of $A_{M}$ can be approximated by a sampling of $f$ on a uniform equispaced grid of $[a,b]$, up to few outliers (at most $o(M)$).
\end{remark}
Similar comments can be done for $D\subset \mathbb{R}^d$, $d\ge 1$, as reported in \cite[Remark 2]{DMS-2016} and in \cite[Remark 2.7]{MRS-2019} also in more generality, while for the singular values in relation (\ref{eq:sv-Symbol_def}) again the same statement can be repeated verbatim.

For Toeplitz matrix-sequences, the following result holds.

\begin{theorem}\label{theorem2}
Let $f\in L^{1}([-\pi,\pi])$. Then,
\begin{align*}
        \{T_{M}(f)\}_{M\in\mathbb{N}}\sim_\sigma\left(f,[-\pi,\pi]\right),
\end{align*}
that is, the generating function of the sequence is also the singular value symbol.
Furthermore, if $f$ is real-valued function almost everywhere (a.e.), hen
\begin{align*}
        \{T_{M}(f)\}_{M\in\mathbb{N}}\sim_\lambda\left(f,[-\pi,\pi]\right),
\end{align*}
that is, the generating function of the sequence is also the (spectral) symbol.
\end{theorem}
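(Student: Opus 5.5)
This is the Szegő–Tilli–Tyrtyshnikov–Zamarashkin theorem, and I would prove it by reducing to trigonometric polynomials and passing to the limit through the approximating-class-of-sequences machinery.

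\emph{Step 1: Laurent polynomials.} First take $f=p$, a trigonometric polynomial of degree $r$. Let $C_M(p)$ be the circulant matrix built from the same coefficients, wrapped periodically. Its eigenvalues are exactly $p(2\pi j/M)$, and since it is normal its singular values are $|p(2\pi j/M)|$. Moreover $T_M(p)-C_M(p)$ is nonzero only in two $r\times r$ corner blocks, so it has rank at most $2r=O(1)$. The singular value interlacing inequalities for low-rank perturbations then imply that $\frac1M\sum F(\sigma_j(T_M(p)))$ and $\frac1M\sum F(\sigma_j(C_M(p)))$ differ by $O(r\|F\|_\infty/M)$. The latter sum is a Riemann sum that converges to $\frac1{2\pi}\int F(|p|)$, which gives $\sim_\sigma$. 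If $p$ is real, both matrices are Hermitian, so Cauchy interlacing gives the same conclusion for eigenvalues.

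\emph{Step 2: density in $L^1$.} For general $f\in L^1$, choose trigonometric polynomials $p_m$ (for instance Fejér means) with $\|f-p_m\|_{L^1}\to0$; if $f$ is real, take $p_m$ real as well. Linearity gives $T_M(f)=T_M(p_m)+T_M(f-p_m)$. The key estimate is the trace-norm bound
\[
\|T_M(g)\|_{1}\le \frac{M}{2\pi}\|g\|_{L^1}.
\]
To prove it, write $T_M(g)=\frac1{2\pi}\int g(\theta)\,v(\theta)v(\theta)^*\,d\theta$, where $v(\theta)=(e^{\iota k\theta})_{k}$, and note that each rank-one matrix $vv^*$ has trace norm $M$. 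Splitting by singular-value size, this bound shows that $T_M(f-p_m)$ has at most $\sqrt{\|f-p_m\|_{L^1}}\,M$ singular values exceeding $\sqrt{\|f-p_m\|_{L^1}}/(2\pi)$. So $\{T_M(p_m)\}$ is an approximating class of sequences for $\{T_M(f)\}$ in the a.c.s.\ sense.

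\emph{Step 3: passing to the limit.} This is the main technical point. For singular values, apply the standard a.c.s.\ closure result: if $\{B_{M,m}\}\sim_\sigma p_m$, $\{B_{M,m}\}$ is an a.c.s.\ for $\{A_M\}$, and $p_m\to f$ in measure, then $\{A_M\}\sim_\sigma f$. The proof takes $F$ Lipschitz with compact support, which suffices by density in $\mathcal C_0$, and controls the low-rank part and the small-norm part separately via Weyl-type inequalities.

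For eigenvalues with $f$ real, $T_M(f)$ and $T_M(p_m)$ are Hermitian, and the Hermitian version of the closure result applies. The decomposition $T_M(f)=T_M(p_m)+R_M+N_M$, with small rank and small norm as above, perturbs the eigenvalue counting functions only by $o(M)$ plus a shift of order $\sqrt{\|f-p_m\|_{L^1}}$, by Weyl and Cauchy interlacing. Letting first $M\to\infty$ and then $m\to\infty$, and using $F(p_m)\to F(f)$ in $L^1$ for continuous bounded $F$, yields $\{T_M(f)\}\sim_\lambda(f,[-\pi,\pi])$.
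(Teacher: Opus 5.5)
Your proof is correct. It is essentially the matrix-theoretic argument of the works the paper relies on for the $L^1$ case \cite{ZaTy-L1,Tilli-L1,GLT-I}: circulant comparison for trigonometric polynomials, then the trace-norm bound $\|T_M(g)\|_1\le \frac{M}{2\pi}\|g\|_{L^1}$ to build an a.c.s., then the a.c.s.\ closure theorems. It is not Szeg\H{o}'s original analytic proof, and the paper itself gives no argument beyond these citations.

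Two small repairs are needed.

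First, in Step 1 a bound $O(r\|F\|_\infty/M)$ that is uniform in $F$ is false. Interlacing only says the two counting functions differ by at most $2r$. Since the spectra of $T_M(p)$ and $C_M(p)$ are generally disjoint, a continuous $F$ with $\|F\|_\infty=1$ can equal $1$ on one spectrum and $0$ on the other. What you actually get is a difference of at most $2r\,\mathrm{Var}(F)/M$ for $F$ of bounded variation, for instance Lipschitz with compact support. You then reach all of $\mathcal{C}_0$ by uniform approximation, which is the same density step you already use in Step 3.

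Second, in the Hermitian case, take the splitting $T_M(f-p_m)=R_M+N_M$ from the spectral decomposition of $T_M(f-p_m)$. That way $R_M$ and $N_M$ are Hermitian, as Weyl's inequalities require.
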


\begin{remark}\label{rem:szego results}
The previous theorem was originally proven by Szeg\H{o} \cite{grenander1984toeplitz} using analytic tools and the theory of orthogonal polynomials.
The generalizations to the $L^1$, together with their multilevel and block versions, can be found in \cite{ZaTy-L1,Tilli-L1} with the use of genuine matrix-theoretic tools; see also \cite[Chapter 6, Section 10.1]{GLT-I} and  \cite[Chapter 3]{GLT-II} for more results including those of localization nature.

In particular, for a real-valued generating function with essinf$f<$esssup$f$, it holds that any eigenvalue of $T_M(f)$ lies in the open interval $($essinf$f,$esssup$f)$ and hence, with reference to Remark \ref{remark2}, there are no outliers.
The same type of observations carries over $d$-level Toeplitz matrices having generating functions which are real-valued a.e. on their natural definition domain $[-\pi,\pi]^d$, see again \cite[Chapter 3]{GLT-II}.
\end{remark}


\subsection{Analysis of the space and space-time matrix-sequences}

\begin{proposition}\label{Proposition1} Let $\alpha\in (1, 2)$. The generating function associated to the matrix-sequence $\left\{\mathcal{G}_{\alpha,M}\right\}_{M\in\mathbb{N}}$, where $\mathcal{G}_{\alpha,M}$ is defined in \eqref{eq:Toeplitz matrix}, belongs to the Wiener class.
\end{proposition}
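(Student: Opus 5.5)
The Toeplitz structure of $\mathcal{G}_{\alpha,M}$ in \eqref{eq:Toeplitz matrix} gives its Fourier coefficients directly: $b_{-1}=\omega_0^{(\alpha)}$, $b_{k}=\omega_{k+1}^{(\alpha)}$ for $k\ge 0$, and $b_k=0$ for $k\le -2$. The candidate generating function is therefore
\[
g_\alpha(x)=\sum_{k=0}^{\infty}\omega_k^{(\alpha)}{\rm e}^{\iota (k-1)x}.
\]
By the remark after Definition \ref{def3}, it is enough to show $\sum_{k\ge 0}|\omega_k^{(\alpha)}|<\infty$. Then the Fourier series converges uniformly, $g_\alpha$ is continuous and $2\pi$-periodic, and its Fourier coefficients are exactly the $b_k$ above. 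So the proof reduces to absolute summability of the weights, for both choices $(p,q)=(1,0)$ and $(p,q)=(1,-1)$.

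The main tool is Lemma \ref{Lemma1}. Take first $(p,q)=(1,0)$. Only $\omega_1^{(\alpha)}$ and possibly $\omega_2^{(\alpha)}$ can be negative; the lemma gives $\omega_k^{(\alpha)}\ge 0$ for $k\ge 3$. Hence
\[
\sum_{k\ge0}|\omega_k^{(\alpha)}|=\sum_{k\ge0}\omega_k^{(\alpha)}+2\sum_{k:\,\omega_k<0}|\omega_k^{(\alpha)}|\le 0+2\big(|\omega_1^{(\alpha)}|+|\omega_2^{(\alpha)}|\big)<\infty .
\]
For this to be legitimate, the series $\sum_k\omega_k^{(\alpha)}$ must converge, and it does, to $0$, by the lemma. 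More carefully: the nonnegative tail $\sum_{k\ge 3}\omega_k^{(\alpha)}$ has partial sums equal to $\sum_{k=0}^m\omega_k^{(\alpha)}$ minus a fixed finite quantity. Since $\sum_{k=0}^m\omega_k^{(\alpha)}<0$, these partial sums are bounded above by $-(\omega_0+\omega_1+\omega_2)$. A nonnegative series with bounded partial sums converges, which gives absolute convergence.

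The case $(p,q)=(1,-1)$ is the main obstacle. The lemma only says the even-indexed weights are nonnegative and decreasing; odd-indexed ones beyond $\omega_3^{(\alpha)}$ could have either sign. Here I would argue directly from the Grünwald coefficients instead. For $\alpha\in(1,2)$ we have $g_0^{(\alpha)}=1$, $g_1^{(\alpha)}=-\alpha$, and $g_k^{(\alpha)}\ge 0$ for $k\ge 2$. Moreover $g_k^{(\alpha)}=O(k^{-\alpha-1})$, from $g_k^{(\alpha)}=\frac{\Gamma(k-\alpha)}{\Gamma(-\alpha)\Gamma(k+1)}$. Since $\sum_k g_k^{(\alpha)}=(1-1)^\alpha=0$, we also get $\sum_k|g_k^{(\alpha)}|=2\alpha<\infty$. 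Each $\omega_k^{(\alpha)}$ is a fixed linear combination of at most two of $g_k^{(\alpha)}, g_{k-1}^{(\alpha)}, g_{k-2}^{(\alpha)}$, with coefficients bounded by $1$. Therefore
\[
\sum_k|\omega_k^{(\alpha)}|\le \tfrac{\alpha+2}{4}\sum_k|g_k^{(\alpha)}|+\tfrac{2-\alpha}{4}\sum_k|g_k^{(\alpha)}|\le 2\alpha,
\]
and similarly $\sum_k|\omega_k^{(\alpha)}|\le 2\alpha$ for $(p,q)=(1,0)$. This argument covers both cases uniformly, so I would make it the main line and keep the Lemma \ref{Lemma1} argument as a cross-check.

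The proof ends by writing $g_\alpha$ explicitly in closed form, since it is useful later. For $(p,q)=(1,0)$,
\[
g_\alpha(x)={\rm e}^{-\iota x}\Big(\tfrac{\alpha}{2}+\tfrac{2-\alpha}{2}{\rm e}^{\iota x}\Big)(1-{\rm e}^{\iota x})^{\alpha},
\]
and analogously for $(p,q)=(1,-1)$ with the factor $\big(\tfrac{\alpha+2}{4}+\tfrac{2-\alpha}{4}{\rm e}^{2\iota x}\big)$. This confirms that $g_\alpha$ is continuous and vanishes only at $x=0$.
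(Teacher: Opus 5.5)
Your proposal is correct, and your main line is a genuinely different (though closely related) route from the paper's. The paper stays at the level of the weights. For $(p,q)=(1,0)$ it combines the sign pattern of Lemma \ref{Lemma1} with $\sum_k\omega_k^{(\alpha)}=0$ to get $\sum_k|\omega_k^{(\alpha)}|=|\omega_1^{(\alpha)}|+|\omega_2^{(\alpha)}|+|\omega_1^{(\alpha)}+\omega_2^{(\alpha)}|\le 2(|\omega_1^{(\alpha)}|+|\omega_2^{(\alpha)}|)$, which is essentially your cross-check, and it then handles $(p,q)=(1,-1)$ with ``similarly''. You push the same sign-plus-zero-sum idea down to the Gr\"unwald coefficients: $g_0^{(\alpha)}=1$, $g_1^{(\alpha)}=-\alpha$ and $g_k^{(\alpha)}>0$ for $k\ge 2$, hence $\sum_k|g_k^{(\alpha)}|=2\alpha$. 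You then use that each $\omega_k^{(\alpha)}$ is a combination of shifted $g^{(\alpha)}$'s with nonnegative coefficients summing to $1$. The paper's version needs nothing beyond the cited lemma. Yours needs the binomial-series facts, but it buys several things: a uniform treatment of both $(p,q)$ choices, an explicit bound $\sum_k|\omega_k^{(\alpha)}|\le 2\alpha$, independence from the finer (and, for $(1,-1)$, incompletely stated) sign information in Lemma \ref{Lemma1}, and the closed form of the symbol at no extra cost. Note that $g_k^{(\alpha)}=O(k^{-\alpha-1})$ alone already gives summability; the zero sum only fixes the constant. Your worry about odd indices for $(1,-1)$ is in fact settled by your own representation. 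For $k\ge 4$ both $g_k^{(\alpha)}$ and $g_{k-2}^{(\alpha)}$ are nonnegative, so $\omega_k^{(\alpha)}\ge 0$. The only negative weights are therefore $\omega_1^{(\alpha)}$ and $\omega_3^{(\alpha)}$, and the paper's ``similarly'' goes through with bound $2(|\omega_1^{(\alpha)}|+|\omega_3^{(\alpha)}|)$.
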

\begin{proof}
For the case when $(p,q)=(1,0)$, let us observe that $\mathcal{G}_{\alpha,M}$  is defined by the trigonometric polynomial
	\begin{equation*}
		{b_{\alpha,M}}\left(x\right)=\sum_{k=0}^{M}{\omega_{k}^{(\alpha)}{\rm{e}}^{\iota (k-1)x}},
	\end{equation*}
which converges to ${b_{\alpha}}\left(x\right)=\sum_{k=0}^{\infty}{\omega_{k}^{(\alpha)}{\rm{e}}^{\iota (k-1)x}}$ so that
${b_{\alpha}}$ is well defined and it is the generating function of the matrix-sequence $\left\{\mathcal{G}_{\alpha,M}\right\}_{M\in\mathbb{N}}$. To prove this we prove that $\sum_{k=0}^{\infty}\vert \omega_{k}^{(\alpha)} \vert<\infty$, which is equivalent to demonstrate that ${b_{\alpha,M}}$ lies in Wiener class for $\alpha\in \left(1,2\right)$.
From Lemma \ref{Lemma1}, we know that $\omega_{0}^{(\alpha)} >0$, $\omega_{1}^{(\alpha)} <0$, and $\omega_{k}^{(\alpha)} \ge 0$ for $k\ge 3$.
In addition, though $\omega_{2}^{(\alpha)}$ changes in sign, $\omega_{1}^{(\alpha)}+\omega_{2}^{(\alpha)} <0$ for $\alpha \in (1,2)$.
Then,
\begin{equation*}
\sum_{k=0}^{\infty}\vert \omega_{k}^{(\alpha)} \vert=\sum_{\substack{k=0 \\ k\neq 1,2}}^{\infty}{\omega_{k}^{(\alpha)}}+\vert\omega_{1}^{(\alpha)}\vert+\vert \omega_{2}^{(\alpha)}\vert.
\end{equation*}
By Lemma \ref{Lemma1}, we have also
\begin{equation*}
	\sum_{k=0}^{\infty}\omega_{k}^{(\alpha)} =0,
\end{equation*}
hence
\begin{equation*}
\sum_{\substack{k=0 \\ k\neq 1,2}}^{\infty}{\omega_{k}^{(\alpha)}} = \vert\omega_{1}^{(\alpha)}+ \omega_{2}^{(\alpha)}\vert
\end{equation*}
and
\begin{equation*}
\sum_{k=0}^{\infty}\vert \omega_{k}^{(\alpha)} \vert = \vert \omega_{1}^{(\alpha)} \vert + \vert \omega_{2}^{(\alpha)} \vert + \vert \omega_{1}^{(\alpha)}+ \omega_{2}^{(\alpha)} \vert   \le 2 (\vert \omega_{1}^{(\alpha)} \vert + \vert \omega_{2}^{(\alpha)} \vert),
\end{equation*}
which implies that $b_{\alpha}$ belongs to the Wiener class. Similarly, we can prove the claim for the case where $(p,q)=(1,-1)$.
\end{proof}

After defining \eqref{eq:BM}, we introduce
	\begin{equation*}
		f_{\alpha,M}\left(x\right)={	b_{\alpha,M}\left(x\right)+\overline{	b_{\alpha,M}\left(x\right)}}=2
		\sum_{k=0}^{M}{\omega_{k}^{(\alpha)}\cos\left((k-1)x\right)},
	\end{equation*}
which clearly converges to $f_{\alpha}=b_{\alpha}+\overline{b_{\alpha}}$ according to Proposition \ref{Proposition1}.

In the next proposition, we provide the generating function of the Toeplitz sequence $\left\{G_{\alpha,M}\right\}_{M\in\mathbb{N}}$.

\begin{proposition}\label{Proposition2}
\cite{tian2015class} The matrix-sequence $\left\{G_{\alpha,M}\right\}_{M\in\mathbb{N}}$ is generated by the function
\begin{equation}\label{eq:symf}
	f_{\alpha}\left(x\right)=\begin{cases}
	    2^{\alpha+1}\Big(\sin{\frac{x}{2}}\Big)^{\alpha}\Big(\frac{\alpha}{2}\cos{\left(\frac{\alpha}{2}(x-\pi)-x\right)}+\frac{2-\alpha}{2}\cos{\left(\frac{\alpha}{2}(x-\pi)\right)}\Big),\quad (p,q)=(1,0),\\
        \\
        2^{\alpha+1}\Big(\sin{\frac{x}{2}}\Big)^{\alpha}\Big(\frac{\alpha}{2}\sin{\left(\frac{\alpha}{2}(x-\pi)-x\right)}\sin(x)+\cos{\left(\frac{\alpha}{2}(x-\pi)\right)\cos(x)}\Big),\quad (p,q)=(1,-1).
	\end{cases}
\end{equation}
\end{proposition}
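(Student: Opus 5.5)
The plan is to compute $f_\alpha=b_\alpha+\overline{b_\alpha}=2\,\mathrm{Re}\,b_\alpha$ in closed form. I would use the generating series of the Gr\"unwald--Letnikov coefficients,
\[
\sum_{k=0}^{\infty} g_k^{(\alpha)} z^k=(1-z)^\alpha ,\qquad |z|\le 1 ,
\]
with the principal branch of the power. This identity follows from the recursion $g_k^{(\alpha)}=(1-\frac{\alpha+1}{k})g_{k-1}^{(\alpha)}$, which gives $g_k^{(\alpha)}=(-1)^k\binom{\alpha}{k}$. For $|z|<1$ it is the binomial series. Since $\sum_k|g_k^{(\alpha)}|<\infty$ for $\alpha>0$, Abel's theorem extends it to the unit circle.

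The same applies to the weights $\omega_k^{(\alpha)}$, which are absolutely summable by Proposition \ref{Proposition1}. So $b_\alpha(x)=\sum_k \omega_k^{(\alpha)} e^{\iota(k-1)x}$ equals $e^{-\iota x}$ times the boundary value of a power series. From the definitions of the weights:
\begin{itemize}
\item for $(p,q)=(1,0)$, $\sum_k\omega_k^{(\alpha)}z^k=\big(\tfrac{\alpha}{2}+\tfrac{2-\alpha}{2}z\big)(1-z)^\alpha$;
\item for $(p,q)=(1,-1)$, $\sum_k\omega_k^{(\alpha)}z^k=\big(\tfrac{\alpha+2}{4}+\tfrac{2-\alpha}{4}z^2\big)(1-z)^\alpha$.
\end{itemize}

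Next, set $z=e^{\iota x}$. Since $f_\alpha$ is a cosine series, it is even, so it suffices to work on $x\in[0,\pi]$ and extend by symmetry. Then
\[
1-e^{\iota x}=-2\iota\sin\tfrac{x}{2}\,e^{\iota x/2}=2\sin\tfrac{x}{2}\,e^{\iota (x-\pi)/2}.
\]
The argument $(x-\pi)/2$ lies in $[-\pi/2,0]$, which is consistent with the principal branch, because $\mathrm{Re}(1-z)\ge 0$ on the closed disc. Hence
\[
(1-e^{\iota x})^\alpha=2^\alpha(\sin\tfrac{x}{2})^\alpha e^{\iota\varphi},\qquad \varphi=\tfrac{\alpha}{2}(x-\pi).
\]

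For $(1,0)$ this gives
\[
b_\alpha(x)=2^\alpha(\sin\tfrac{x}{2})^\alpha e^{\iota\varphi}\big(\tfrac{\alpha}{2}e^{-\iota x}+\tfrac{2-\alpha}{2}\big),
\]
and taking twice the real part yields
\[
2^{\alpha+1}(\sin\tfrac{x}{2})^\alpha\big(\tfrac{\alpha}{2}\cos(\varphi-x)+\tfrac{2-\alpha}{2}\cos\varphi\big),
\]
which is the first line of \eqref{eq:symf}.

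For $(1,-1)$ the bracket becomes $\tfrac{\alpha+2}{4}e^{-\iota x}+\tfrac{2-\alpha}{4}e^{\iota x}$. Twice the real part gives
\[
2^{\alpha+1}(\sin\tfrac{x}{2})^\alpha\big(\tfrac{\alpha+2}{4}\cos(\varphi-x)+\tfrac{2-\alpha}{4}\cos(\varphi+x)\big).
\]
Expanding the cosines and collecting terms, the bracket reduces to $\cos\varphi\cos x+\tfrac{\alpha}{2}\sin\varphi\sin x$. I would then compare this carefully with the second line of \eqref{eq:symf}. My computation produces $\sin\varphi$ where the statement has $\sin(\varphi-x)$, so either a trigonometric identity or a misprint in the statement has to be resolved. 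In the write-up I would present the derived expression and note the discrepancy.

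The main obstacle is rigor at the boundary $|z|=1$ together with the branch choice, not the algebra. The Abel limit is justified by absolute summability, that is, by the Wiener-class property of Proposition \ref{Proposition1} and its analogue for the $g_k^{(\alpha)}$. The branch must be tracked so that $(\sin\frac{x}{2})^\alpha$ is real and nonnegative on $[0,\pi]$. On $[-\pi,0]$ the evenness of $f_\alpha$ is used, reading $(\sin\frac{x}{2})^\alpha$ as $|\sin\frac{x}{2}|^\alpha$. Finally, Definition \ref{def3} identifies $f_\alpha$ as the generating function of $G_{\alpha,M}=\mathcal{G}_{\alpha,M}+\mathcal{G}_{\alpha,M}^T$. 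This holds because $\mathcal{G}_{\alpha,M}$ is generated by $b_\alpha$ and its transpose is generated by $\overline{b_\alpha}$ (the coefficients are real).
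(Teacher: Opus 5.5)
Your argument is correct. It is the standard computation behind the result, which the paper does not prove but simply cites from \cite{tian2015class}: write $\sum_k\omega_k^{(\alpha)}z^k$ as a polynomial times $(1-z)^\alpha$, pass to $|z|=1$ by absolute summability, use $1-e^{\iota x}=2\sin\frac{x}{2}\,e^{\iota(x-\pi)/2}$ on $[0,\pi]$, and take $2\,\mathrm{Re}$.

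The discrepancy you found is a genuine misprint in the second line of \eqref{eq:symf}, not a hidden trigonometric identity. You should state your bracket $\cos\varphi\cos x+\frac{\alpha}{2}\sin\varphi\sin x$ as the correct one. At $x=\pi/2$, directly, $f_\alpha(\pi/2)=2\,\mathrm{Re}\bigl(-\iota\,\tfrac{\alpha}{2}(1-\iota)^{\alpha}\bigr)=-\alpha\,2^{\alpha/2}\sin\frac{\alpha\pi}{4}$, which your formula reproduces. The printed formula gives $-\alpha\,2^{\alpha/2}\cos\frac{\alpha\pi}{4}$ instead, and this differs for every $\alpha\in(1,2)$. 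As a further check, at $\alpha=2$ the weights reduce to $(1,-2,1)$ and the true symbol is $-8\sin^2\frac{x}{2}$, whereas the printed formula gives $-8\sin^2\frac{x}{2}\cos^2x$.
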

\begin{remark}\label{remark3}
	It is worth noticing that the function $f_\alpha(x)$ has a zero at the origin and is negative for $x\neq 0$.
	\end{remark}
\begin{figure}
		\centering
		\begin{subfloat}[$(p,q)=(1,0).$]
			{\resizebox*{7cm}{!}{\includegraphics[width=\textwidth]{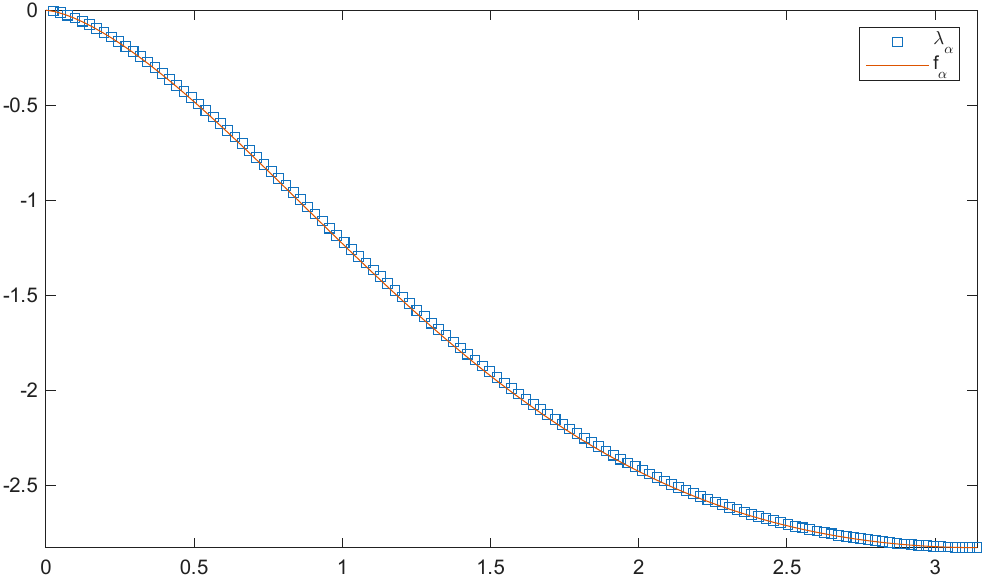}}}
		\end{subfloat}
		\begin{subfloat}[$(p,q)=(1,-1).$]
			{\resizebox*{7cm}{!}{\includegraphics[width=\textwidth]{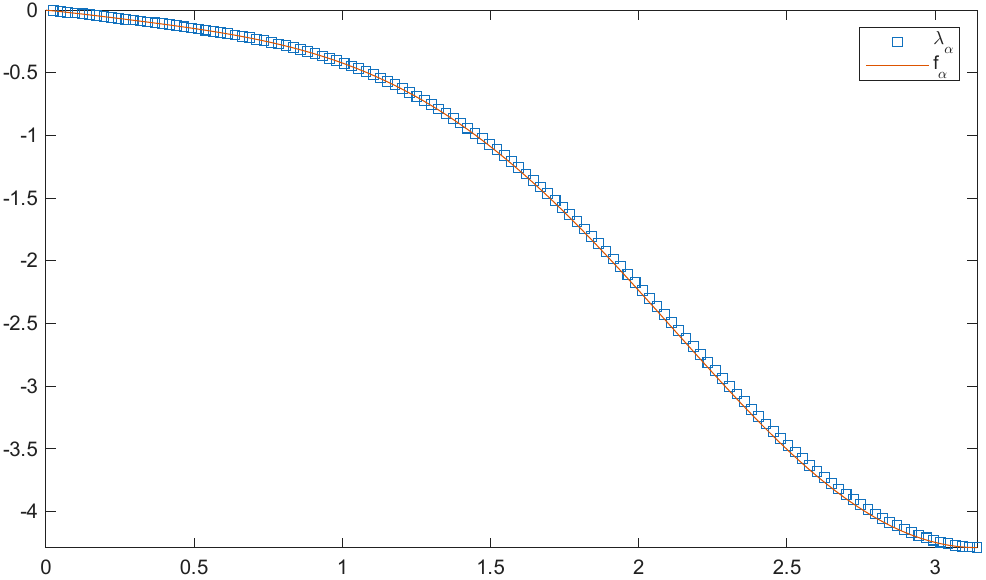}}}
		\end{subfloat}
		\caption{Graph of the symbol $f_{\alpha}\left(x\right)$ and the eigenvalues of the Toeplitz sequence $\left\{G_{\alpha,M}\right\}_{M\in\mathbb{N}}$ for $\alpha=1.5$, and $M=2^{7}$.}
	\label{fig: asymptotical behaviour}
	\end{figure}
	
\begin{theorem}\label{Proposition3_2D_1}
Let $\gamma = \gamma(h_t,h_x) = - {h_t}/{( 2h_{x}^{\alpha} \cos({\alpha\pi}/{2}))}$ such that
$\lim_{h_t,h_x \rightarrow 0^+} \gamma(h_t,h_x)=\ \gamma^*<\infty$.
Then, the symmetric positive definite matrix-sequence is such that
\begin{equation}\label{th1-1D}
\{ A_{\alpha,M}  \}_M  \sim_{\lambda}  1 - \epsilon^2 \gamma^* f_\alpha(x).
\end{equation}
In the case where the sequence $\gamma(h_t,h_x)$ is unbounded with limit $\gamma^*=\infty$, the scaled matrix-sequence
$\{\hat A_{\alpha,M}\equiv \gamma^{-1}(h_t,h_x) A_{\alpha,M}  \}_M$ is such that
\begin{equation}\label{th2-1D}
\{\hat A_{\alpha,M}  \}_M  \sim_{\lambda}  - \epsilon^2  f_\alpha(x).
\end{equation}
Similarly, in the two-dimensional setting, let $\gamma_j = \gamma(h_t,h_{x_j}) = - {h_t}/{( 2h_{x_j}^{\alpha_j} \cos({\alpha_j\pi}/{2}))}$, $j=1,2$,
such that
\[
\lim_{h_t,h_{x_j} \rightarrow 0^+} \gamma_j(h_t,h_x)=\ \gamma_j^* < \infty,\ j=1,2.
\]
Then, the symmetric positive definite matrix-sequence is such that
\begin{equation}\label{th1-2D}
\{ A_{\boldsymbol{\alpha},\mathbf{m}}  \}_\mathbf{m}  \sim_{\lambda}  1 - \epsilon^2 ( \gamma_1^* f_{\alpha_1}(x_1)+ \gamma_2^* f_{\alpha_2}(x_2)),
\end{equation}
where necessarily $\gamma_2^*=0$ if $\alpha_1>\alpha_2$, and alternatively $\gamma_1^*=0$ if $\alpha_2>\alpha_1$.
In the case where the sequence $\gamma=\max_j\gamma_j=\gamma_{\hat j}$ is unbounded with limit $\gamma^*=\infty$, $\hat j=1$ or $\hat j=2$, the scaled matrix-sequence  $\{\hat A_{\boldsymbol{\alpha},\mathbf{m}}\equiv \gamma^{-1} A_{\boldsymbol{\alpha},\mathbf{m}}  \}_\mathbf{m}$ is such that
\begin{equation}\label{th2-2D}
\{ \hat A_{\boldsymbol{\alpha},\mathbf{m}}  \}_\mathbf{m}  \sim_{\lambda}  - \epsilon^2   f_{\alpha_{\hat j}}(x_{\hat j}).
\end{equation}
\end{theorem}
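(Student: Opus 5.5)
The plan is to write $A_{\alpha,M}$ as a Toeplitz matrix with an explicit generating function, and then obtain the distribution through a perturbation argument.

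\textbf{One-dimensional case.} By \eqref{eq:Coe_Matrix_form_1D}, using $\epsilon^2 h_t\overline{c}_\alpha=\epsilon^2\gamma$, we have
\[
A_{\alpha,M}=(1+sh_t)I_M-\epsilon^2\gamma\, G_{\alpha,M}.
\]
By Propositions \ref{Proposition1} and \ref{Proposition2}, $G_{\alpha,M}=T_M(f_\alpha)$, where $f_\alpha$ is real, continuous (Wiener class) and even. Therefore
\[
A_{\alpha,M}=T_M\bigl(1+sh_t-\epsilon^2\gamma f_\alpha\bigr),
\]
and this matrix is real symmetric. Since $\gamma>0$ (because $\cos(\alpha\pi/2)<0$ for $\alpha\in(1,2)$) and $f_\alpha\le 0$ with an isolated zero (Remark \ref{remark3}), the generating function is at least $1+sh_t>0$ and is not constant. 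Remark \ref{rem:szego results} then places every eigenvalue strictly above $1+sh_t$, so $A_{\alpha,M}$ is positive definite.

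The difficulty is that the symbol depends on $M$ through $h_t$ and $\gamma$. I would compare with the fixed matrix $T_M(1-\epsilon^2\gamma^* f_\alpha)$. The two Hermitian matrices differ by
\[
sh_t I_M-\epsilon^2(\gamma-\gamma^*)G_{\alpha,M},
\]
whose spectral norm is at most
\[
sh_t+\epsilon^2|\gamma-\gamma^*|\,\|f_\alpha\|_\infty\to 0.
\]
The bound $\|T_M(g)\|\le\|g\|_\infty$ gives this directly. By Weyl's inequalities, each ordered eigenvalue moves by $o(1)$. For $F\in\mathcal{C}_0(\mathbb{C})$, which is uniformly continuous, the averages $\frac1M\sum_j F(\mu_j)$ of the two sequences therefore differ by $o(1)$. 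Theorem \ref{theorem2} applied to the fixed real symbol $1-\epsilon^2\gamma^*f_\alpha$ then gives \eqref{th1-1D}.

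If $\gamma\to\infty$, I would instead write
\[
\hat A_{\alpha,M}=\gamma^{-1}(1+sh_t)I_M-\epsilon^2 G_{\alpha,M}.
\]
The first term has norm tending to $0$ and the second is $T_M(-\epsilon^2 f_\alpha)$, so the same Weyl argument yields \eqref{th2-1D}.

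\textbf{Two-dimensional case.} From \eqref{eq:BTTB_matrix},
\[
A_{\boldsymbol{\alpha},\mathbf{m}}=(1+sh_t)I_{\mathbf m}-\epsilon^2\bigl(\gamma_1 G_{\alpha_1,M_1}\otimes I_{M_2}+\gamma_2 I_{M_1}\otimes G_{\alpha_2,M_2}\bigr).
\]
This is the two-level Toeplitz matrix generated by
\[
1+sh_t-\epsilon^2\bigl(\gamma_1 f_{\alpha_1}(x_1)+\gamma_2 f_{\alpha_2}(x_2)\bigr).
\]
It is symmetric, and positive definite by the same sign argument, now using the multilevel localization in Remark \ref{rem:szego results}. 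The Weyl perturbation argument and the multilevel version of Theorem \ref{theorem2} give \eqref{th1-2D}, and the scaled statement \eqref{th2-2D} follows similarly. In the scaled case the non-dominant term $\gamma^{-1}\gamma_{j}G$ with $j\neq\hat j$ has norm $\le(\gamma_j/\gamma)\|f_{\alpha_j}\|_\infty$. This is only bounded, so it does not vanish automatically. I would use the relation $\gamma_j/\gamma\to0$ that comes from the constraint below, or otherwise treat the statement as assuming it.

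\textbf{Main obstacle.} I expect the hardest step to be justifying the claim that necessarily $\gamma_2^*=0$ if $\alpha_1>\alpha_2$, and symmetrically $\gamma_1^*=0$ if $\alpha_2>\alpha_1$. This requires linking the grids; I would assume $h_{x_1}\sim h_{x_2}\sim h$. Then
\[
\frac{\gamma_2}{\gamma_1}\sim C\,h^{\alpha_1-\alpha_2}\to0,
\]
so a finite $\gamma_1^*$ forces $\gamma_2^*=0$. The same ratio gives $\gamma_j/\gamma\to 0$ for the non-dominant index, which is exactly what the scaled statement needed.
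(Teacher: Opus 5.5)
Your proof is correct and has the same skeleton as the paper's. The paper also writes $A_{\alpha,M}=T_M(1-\epsilon^2\gamma^*f_\alpha)+N_M$ with exactly your bound $\|N_M\|\le sh_t+|\gamma-\gamma^*|\epsilon^2\|f_\alpha\|_\infty$, takes $N_M=\gamma^{-1}(1+sh_t)I_M$ in the unbounded case, and does the analogue in two levels. The difference is how you pass from this decomposition to the eigenvalue distribution.

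The paper declares $\{N_M\}_M$ zero-distributed, hence GLT with symbol $0$, and notes that Toeplitz sequences are GLT with their generating function. It then uses the $*$-algebra property and Hermitianity to go from $\sim_{\rm GLT}$ to $\sim_\lambda$. You instead use Weyl's bound $|\lambda_j(A)-\lambda_j(B)|\le\|A-B\|$, uniform continuity of $F\in\mathcal{C}_0(\mathbb{C})$, and the (multilevel) Szeg\H{o}--Tilli theorem directly. Your route is elementary and self-contained, and it works precisely because the correction is Hermitian with vanishing spectral norm. The GLT route is heavier, but it would tolerate corrections that are only zero-distributed (small norm plus low rank). It also delivers GLT membership, which the paper reuses in the next theorem for $A-P$ and the $\tau$-preconditioned sequences.

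You are also more careful than the paper in two places. First, you actually prove positive definiteness: the generating function is $\ge 1+sh_t$ and nonconstant. The paper only asserts it. Second, you see that the scaled two-level claim needs $\gamma_j/\gamma_{\hat j}\to0$ for $j\ne\hat j$, which the paper's ``very same steps'' silently presuppose. Both this condition and the claim that necessarily $\gamma_2^*=0$ when $\alpha_1>\alpha_2$ require some coupling of the grids, such as your $h_{x_1}\asymp h_{x_2}$. Without it, if $h_{x_2}^{\alpha_2}/h_{x_1}^{\alpha_1}$ and $h_t/h_{x_1}^{\alpha_1}$ tend to positive constants, then both $\gamma_j^*$ are positive and finite. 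Moreover, for $\alpha_1=\alpha_2$ and $M_1=M_2$ the scaled limit symbol is $-\epsilon^2\bigl(f_\alpha(x_1)+f_\alpha(x_2)\bigr)$, not $-\epsilon^2 f_\alpha(x_{\hat j})$. So treating $\gamma_j/\gamma_{\hat j}\to0$ as an explicit hypothesis is the right fix.
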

\begin{proof}
When
$\lim_{h_t,h_x \rightarrow 0^+} \gamma(h_t,h_x)=\ \gamma^*<\infty$, we can write
\[
A_{\alpha,M}=I_M-\epsilon^2 \gamma^*G_{\alpha,M} + sh_t I_M - (\gamma(h_t,h_x)-\gamma^*)\epsilon^2 G_{\alpha,M}
\]
with $G_{\alpha,M}$ as in (\ref{eq:BM})-(\ref{eq:Toeplitz matrix}). Hence, taking into account Proposition \ref{Proposition2}, $A_{\alpha,M}$ coincides with the Toeplitz matrix $I_M-\epsilon^2 \gamma^*G_{\alpha,M}=T_M(1 - \epsilon^2 \gamma^* f_\alpha)$ plus a correction of infinitesimal norm
$sh_t I_M - (\gamma(h_t,h_x)-\gamma^*)\epsilon^2 T_M(f_\alpha)$, as both $h_x$ and $h_t$ tend to zero. More precisely, we obtain
\begin{equation}\label{th1-2D}
A_{\alpha,M} =  T_M(1 - \epsilon^2 \gamma^* f_\alpha) + N_M, \ \ \ \|N_M\|\le \psi(h_t,h_x)\equiv sh_t + |\gamma(h_t,h_x)-\gamma^*|\epsilon^2 \|f_{\alpha}\|_\infty,
\end{equation}
since the spectral norm or induced Euclidean norm of $T_M(f)$ is always bounded by the infinity norm of the generating function $f$; see
\cite[Chapter 6]{GLT-I}.

Since $\lim_{h_t,h_x \rightarrow 0^+}\psi(h_t,h_x)=0$ every singular value of $N_M$ is infinitesimal, which implies
$\{ N_M  \}_M\sim_\sigma 0$ in the sense of Definition \ref{def5}, that is $\{ N_M  \}_M\sim_{\rm GLT} 0$, by Axiom {\bf GLT 3}, part 3 in \cite[Chapter 9, p. 170]{GLT-I}. Furthermore, by Axiom {\bf GLT 3}, part 1 in \cite[Chapter 9, p. 170]{GLT-I}, we have $\{ T_M(1 - \epsilon^2 \gamma^* f_\alpha)  \}_M\sim_{\rm GLT} 1 - \epsilon^2 \gamma^* f_\alpha$, so that the $*$ algebra structure of the GLT matrix-sequences (see Axiom {\bf GLT 4} in \cite[Chapter 9, p. 170]{GLT-I}), we deduce that
\[
\{ A_{\alpha,M}  \}_M\sim_{\rm GLT} 1 - \epsilon^2 \gamma^* f_\alpha
\]
and since all the involved matrices are Hermitian, by Axiom {\bf GLT 1} in \cite[Chapter 9, p. 170]{GLT-I}, we infer the validity of (\ref{th1-1D}).

When $\lim_{h_t,h_x \rightarrow 0^+} \gamma(h_t,h_x)=\ \infty$, we can write
\[
\hat A_{\alpha,M}=\equiv \gamma^{-1}(h_t,h_x)A_{\alpha,M} =  \gamma^{-1}(1+ sh_t)I_M-\epsilon^2 G_{\alpha,M},
\]
so that calling $N_M=\gamma^{-1}(1+ sh_t)I_M$, we deduce $\{ N_M  \}_M\sim_{\rm GLT} 0$ and
$\{ T_M(- \epsilon^2 f_\alpha)  \}_M\sim_{\rm GLT} - \epsilon^2 f_\alpha$, by Axiom {\bf GLT 3}, part 3 and part 1 in \cite[Chapter 9, p. 170]{GLT-I}.
Now, by invoking the $*$ algebra structure of the GLT matrix-sequences, we infer
\[
\{ \hat A_{\alpha,M}  \}_M\sim_{\rm GLT} - \epsilon^2  f_\alpha
\]
and since all the involved matrices are again Hermitian, by Axiom {\bf GLT 1} in \cite[Chapter 9, p. 170]{GLT-I}, we conclude the validity of (\ref{th2-1D}).

Now we consider the twolevel setting and we show that the essentials of the proof are already contained in the unilevel case, with the only warning of the proper notation.

If
\[
\lim_{h_t,h_{x_j} \rightarrow 0^+} \gamma_j(h_t,h_x)=\ \gamma_j^* < \infty,\ j=1,2
\]
then necessarily $\gamma_2^*=0$ if $\alpha_1>\alpha_2$, and alternatively $\gamma_1^*=0$ if $\alpha_2>\alpha_1$.

Taking into account the expression of the matrix $T_{\boldsymbol{\alpha},\mathbf{m}}$ and relations (\ref{eq:BTTB_matrix})-(\ref{eq:Final_Linearsystem_2d}), we can write
\[
A_{\boldsymbol{\alpha},\mathbf{m}}=I_M-\epsilon^2 \left(\gamma_1^* T_{\alpha_1,M_{1}} \otimes I_{M_2} + \gamma_2^* I_{M_1} \otimes T_{\alpha_2,M_{2}}\right)
 + E_{\boldsymbol{\alpha},\mathbf{m}}
\]
with $M=M(\mathbf{m})=M_1M_2$ and $N_{\mathbf{m}}$ being an error term, whose expression is reported below:
\[
N_{\mathbf{m}}= sh_t I_M - (\gamma_1(h_t,h_{x_1})-\gamma_1^*)\epsilon^2 T_{\alpha_1,M_{1}} \otimes I_{M_2}
- (\gamma_2(h_t,h_{x_2})-\gamma_2^*)\epsilon^2 I_{M_1} \otimes T_{\alpha_2,M_{2}}.
\]
Since $\|T_M(f)\|\le \|f\|_\infty$ (see \cite[Chapter 3]{GLT-II}) and since $\|X\otimes Y\|=\|X\|\|Y\|$, we infer
\[
\|N_{\mathbf{m}}\| \le \psi(h_t,h_{x_1},h_{x_2})\equiv sh_t + |\gamma_1(h_t,h_{x_1})-\gamma_1^*|\epsilon^2 \|f_{\alpha_1}\|_\infty
+ |\gamma_1(h_t,h_{x_2})-\gamma_2^*|\epsilon^2 \|f_{\alpha_2}\|_\infty,
\]
so that
 $A_{\boldsymbol{\alpha},\mathbf{m}}$ coincides with a twolevel Toeplitz matrix plus a correction of infinitesimal norm, as $h_{x_1},h_{x_2},h_t$ tend to zero. As a consequence,
$\{ N_{\mathbf{m}}  \}_M\sim_\sigma 0$ that is $\{ N_{\mathbf{m}}  \}_M\sim_{\rm GLT} 0$, by Axiom {\bf GLT 3}, part 3 in \cite[Chapter 6, p. 118]{GLT-II}. Furthermore,
setting $T_{\mathbf{m}}=I_M-\epsilon^2 \left(\gamma_1^* T_{\alpha_1,M_{1}} \otimes I_{M_2} + \gamma_2^* I_{M_1} \otimes T_{\alpha_2,M_{2}}\right)$, by Axiom 5 in \cite[p. 2338]{SIMAX25-Rosita} and Axiom {\bf GLT 3}, part 1 in \cite[Chapter 9, p. 170]{GLT-I}, we deduce
$\{ T_{\mathbf{m}}  \}_M\sim_{\rm GLT} 1 - \epsilon^2 ( \gamma_1^* f_{\alpha_1}(x_1)+ \gamma_2^* f_{\alpha_2}(x_2))$; see also  Axiom {\bf GLT 3}, part 1 in \cite[Chapter 6, p. 118]{GLT-II}.

Finally be the $*$ algebra structure of the GLT matrix-sequences (see Axiom {\bf GLT 4} in \cite[Chapter 6, p. 118]{GLT-II}), we deduce that
\[
A_{\boldsymbol{\alpha},\mathbf{m}}  \}_M\sim_{\rm GLT} 1 - \epsilon^2 ( \gamma_1^* f_{\alpha_1}(x_1)+ \gamma_2^* f_{\alpha_2}(x_2))
\]
and since all the involved matrices are Hermitian, by Axiom {\bf GLT 1} in \cite[Chapter 6, p. 118]{GLT-II}, we infer the validity of (\ref{th1-2D}).

When $\gamma=\max_j\gamma_j=\gamma_{\hat j}$ is unbounded with limit $\gamma^*=\infty$, taking into account proper notational adaptations, the very same steps as in the unilevel case leads to (\ref{th2-2D}).
\end{proof}


\section{$\tau$ based preconditioner}\label{sec:tau_preconditioner}

The spectral analysis in the previous section is used for suggesting proper preconditioners. In fact, from the approximation theory point of view, we need that the preconditioning matrix-sequences are such that $\{ A_{\alpha,M} - P_{\alpha,M}  \}_M$ in the unilevel setting and $\{A_{\boldsymbol{\alpha},\mathbf{m}} - P_{\boldsymbol{\alpha},\mathbf{m}}  \}_\mathbf{m}$ in the twolevel setting are zero-distributed matrix-sequences.
According to classical works \cite{linear}, we know that the best matrix algebra is the $\tau$ algebra, because our coefficient matrices are not only of Toeplitz nature, but also real symmetric: for such type of structures in fact the $\tau$ approximation choice is better than classical circulant or $\omega$ circulant approximations and this is true also in the fractional setting \cite{DMS-2016,SIMAX25-Rosita}. Following \cite{EstaS-LAA,SIMAX25-Rosita}, for $f$ H\"older continuous with any positive parameter and $2\pi$-periodic, we know that $\{ T_M(f) - \tau(T_M(f))  \}_M$
is strongly clustered at zero and it is zero-distributed under the weaker assumption that $f\in L^2(-\pi,\pi)$.
More precisely, let $T_M$ be a Toplitz matrix with the first column given by $c = [c_0, \ldots, c_{M-1}]$. Then,
\[
\tau(T_M) := T_M - H(\sigma^2(c), J_M \sigma^2(c)),
\]
with $\sigma^2(c) = [c_2, \ldots, c_{M-1},0,0]$, $J_M$ flip matrix, and corresponding $H$ Hankel matrix.
Thanks to \cite{bini1983spectral}, the $\tau$ matrix can be diagonalized as
\begin{align}\label{eq: tau_matrix_diagonalization}
    \tau(\mathcal{\bf G}_{M})=S_{M_{i}}D_{M_{i}}S_{M_{i}},
\end{align}
where 
$S_{M}$ is the discrete sine transform matrix whose entries are
\begin{equation}\label{eq: sine_transform_matrix}
    S_{M}=\left[\sqrt{\frac{2}{M+1}}\sin{\bigg(\frac{ij\pi}{M+1}\bigg)}\right]_{i,j=1}^{M}.
\end{equation}

Hence, according to the discussion above, we consider the following $\tau$-based preconditioner
\begin{equation}
P_{\alpha,M} = (1 + s h_t ) I_M - \epsilon^2 h_t \overline{c}_\alpha \tau(G_{\alpha,M}),
\end{equation}
where, as already indicated, $\tau(G_{\alpha,M})$ denotes the $\tau$ matrix related to the Toeplitz matrix $G_{\alpha,M}$.

In the two-dimensional setting, the $\tau$ preconditioner  is defined accordingly, where
\begin{equation}
\tau(G_{\boldsymbol{\alpha},\mathbf{m}}) = \tau(G_{\alpha_1,M_1}) \otimes I_{M_2} + I_{M_2} \otimes \tau(G_{\alpha_2,M_2}).
\end{equation}
Clearly, $\tau(G_{\boldsymbol{\alpha},\mathbf{m}})$
can be diagonalised as $\tau(G_{\boldsymbol{\alpha},\mathbf{m}}) = S_\mathbf{m} \Lambda_\mathbf{m} S_\mathbf{m}$,
 with $S_\mathbf{m} = S_{M_1} \otimes S_{M_2}$ and $\Lambda_\mathbf{m} = \Lambda_{M_1} \otimes I_{M_2} + I_{M_1} \otimes \Lambda_{M_2}$.

%
%
\begin{theorem}\label{Proposition3_2D}
Under the same notations as in Theorem \ref{Proposition3_2D_1}, the error matrix-sequences are clustered at zero
i.e.
\begin{equation*}
\{ A_{\alpha,M} - P_{\alpha,M}  \}_M,\ \{ \hat A_{\alpha,M} - \hat P_{\alpha,M}  \}_M   \sim_{\lambda}  0
\end{equation*}
and
\begin{equation*}
\{A_{\boldsymbol{\alpha},\mathbf{m}} - P_{\boldsymbol{\alpha},\mathbf{m}}  \}_\mathbf{m}, \
\{\hat A_{\boldsymbol{\alpha},\mathbf{m}} - P_{\boldsymbol{\alpha},\mathbf{m}}  \}_\mathbf{m}  \sim_{\lambda}  0.
\end{equation*}
Finally, the corresponding preconditioned matrix-sequences both in one and two dimensions and both in the scaled and nonscaled versions are
all clustered at $1$ in weak sense.
\end{theorem}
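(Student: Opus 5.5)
The plan is to reduce everything to the Toeplitz-minus-$\tau$ correction and then use the GLT machinery already invoked in the proof of Theorem \ref{Proposition3_2D_1}. In one dimension the identity terms cancel, so
\[
A_{\alpha,M}-P_{\alpha,M} = -\epsilon^2 h_t\overline{c}_\alpha\bigl(G_{\alpha,M}-\tau(G_{\alpha,M})\bigr) = -\epsilon^2\gamma(h_t,h_x)\,H_M,
\]
where $H_M$ is the Hankel correction built from $\sigma^2(c)$ in the definition of $\tau$. By Proposition \ref{Proposition1}, $f_\alpha$ lies in the Wiener class, hence it is continuous, bounded and in $L^2(-\pi,\pi)$. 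The quoted result from \cite{EstaS-LAA,SIMAX25-Rosita} then gives that $\{T_M(f_\alpha)-\tau(T_M(f_\alpha))\}_M$ is zero-distributed. Moreover, $H_M$ is real symmetric, so $\sim_\sigma 0$ and $\sim_\lambda 0$ coincide. A direct argument is also available: for a Wiener-class symbol, truncating the Fourier coefficients at index $k$ splits $H_M$ into a matrix of rank at most $2k$ plus a matrix of norm at most $2\sum_{j>k}|c_j|$. This gives a strong cluster at zero, and the zero cluster of $G_{\alpha,M}-\tau(G_{\alpha,M})$ follows directly.

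In the bounded case $\gamma\to\gamma^*<\infty$, multiplying by the bounded scalar $-\epsilon^2\gamma$ preserves the rank-plus-small-norm splitting, so $\{A_{\alpha,M}-P_{\alpha,M}\}_M\sim_\lambda 0$. In the unbounded case I would work with the scaled matrices: $\hat A_{\alpha,M}-\hat P_{\alpha,M}=\gamma^{-1}(A_{\alpha,M}-P_{\alpha,M})=-\epsilon^2 H_M$, and the scalar $\gamma$ disappears. This is why the scaled version is the right object to state. Equivalently, both matrices are GLT with the same symbol, so their difference has GLT symbol $0$, and Hermitianity together with Axiom GLT 1 gives $\sim_\lambda 0$.

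For two levels, the differences are
\[
-\epsilon^2\bigl(\gamma_1 H_{M_1}\otimes I_{M_2}+\gamma_2\, I_{M_1}\otimes H_{M_2}\bigr)
\]
and its $\gamma^{-1}$-scaled analogue. The rank of $X\otimes I_{M_2}$ is $\mathrm{rank}(X)\,M_2=o(M_1)M_2$, and $\|X\otimes I\|=\|X\|$. The same splitting therefore works, provided both $M_1,M_2\to\infty$, which the two-level setting implicitly requires. The scaled coefficients $\gamma_j/\gamma\le 1$ are bounded, so the scaled case is handled in the same way.

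For the weak clustering at $1$ of the preconditioned sequences, write
\[
P^{-1}A = I + P^{-1}(A-P),
\]
which is similar to $I+P^{-1/2}(A-P)P^{-1/2}$ since $P$ is symmetric positive definite. The crucial step, and the main obstacle, is a uniform bound $\|P^{-1}\|\le C$, i.e. a lower bound on the minimal eigenvalue of the preconditioner. The eigenvalues of $\tau(G_{\alpha,M})$ are $f_\alpha$ sampled at $j\pi/(M+1)$, because the $\tau$ matrix of a symmetric Toeplitz matrix with Wiener symbol has eigenvalues given by samples of the (cosine) symbol. These samples are nonpositive by Remark \ref{remark3}, and $-\overline{c}_\alpha h_t>0$.

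Hence in the nonscaled case $\lambda_{\min}(P)\ge 1$. In the scaled case $\lambda_{\min}(\hat P)\ge \epsilon^2\min_j|f_\alpha(j\pi/(M+1))|\sim c M^{-\alpha}$, which degenerates, so a plain norm bound fails. There I would instead use the fact that the distribution of $\hat P^{-1}\hat A$ has GLT symbol $(-\epsilon^2f_\alpha)^{-1}(-\epsilon^2 f_\alpha)=1$. This holds because $f_\alpha$ vanishes only at $x=0$ (a set of measure zero), so GLT inversion (Axiom GLT 4) applies. The resulting $\sim_\sigma 1$, combined with the similarity to a Hermitian matrix, gives the weak cluster at $1$.

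In the nonscaled case, the bounded $P^{-1}$ turns the rank-plus-small-norm splitting of $A-P$ into one for $P^{-1}(A-P)$, which gives the cluster at $1$ directly. The two-dimensional case follows identically, since $\Lambda_\mathbf{m}$ has nonpositive entries built from the samples of $f_{\alpha_1}$ and $f_{\alpha_2}$.
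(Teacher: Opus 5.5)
Much of your argument is correct, and part of it takes a more elementary route than the paper. For the error sequences, the paper only cites the $\tau$-approximation results of \cite{SIMAX25-Rosita} to get $\sim_{\sigma,\rm GLT}0$ and then uses Hermitianity. Your explicit splitting $H_M=R_k+E_k$, with $\mathrm{rank}(R_k)\le 2k$ and $\|E_k\|\le 2\sum_{j>k}|c_j|$, is correct and proves more: a strong cluster in 1D, and $O(k(M_1+M_2))$ outliers in 2D. For the nonscaled preconditioned sequences, the paper just invokes \cite[Exercise 8.4]{GLT-I} and \cite[Theorem 7.1]{GLT-II}. Your alternative is valid once $\|P^{-1}\|\le C$ holds: apply the splitting to the Hermitian matrix $P^{-1/2}(A-P)P^{-1/2}$ and use Weyl's inequalities. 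It again gives a strong cluster at $1$ in 1D. Two steps, however, need repair.

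\textbf{The scaled case.} From $\{\hat P^{-1}\hat A\}\sim_{\rm GLT}1$, Axiom GLT~1 gives only $\{\hat P^{-1}\hat A\}\sim_\sigma 1$, because $\hat P^{-1}\hat A$ is not Hermitian. Singular values are not invariant under similarity, so knowing that $\hat P^{-1}\hat A$ is similar to a Hermitian matrix transfers nothing to its eigenvalues. For example, $\{-I_M\}$ is Hermitian and $\sim_\sigma 1$, yet all its eigenvalues equal $-1$. The missing step is to apply the GLT calculus to the Hermitian matrix $\hat P^{-1/2}\hat A\hat P^{-1/2}$ itself; this is exactly what the exercise cited by the paper supplies.
\begin{itemize}
\item Once $\hat P$ is positive definite, $\hat P^{-1/2}$ stays in the $\tau$ algebra.
\item $\{\hat P^{1/2}\}\sim_{\rm GLT}(-\epsilon^2 f_\alpha)^{1/2}$ by the functional calculus for Hermitian GLT sequences.
\item Inversion is legitimate because $f_\alpha\neq 0$ a.e., so $\{\hat P^{-1/2}\hat A\hat P^{-1/2}\}\sim_{\rm GLT}1$.
\item Hence $\sim_\lambda 1$ by GLT~1, and only now does similarity give the weak cluster of $\hat P^{-1}\hat A$.
\end{itemize}
Note also that $\hat P^{-1}\hat A$ coincides with $P^{-1}A$.

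\textbf{The eigenvalues of $\tau(G_{\alpha,M})$.} They are not samples of $f_\alpha$. The Hankel correction discards the tail, so they are the values at $\theta_j=j\pi/(M+1)$ of the truncated sum $f_{\alpha,M}(\theta)=2\sum_{k=0}^{M}\omega_k^{(\alpha)}\cos((k-1)\theta)$. Remark~\ref{remark3} therefore says nothing about their sign. Write $c_0=2\omega_1^{(\alpha)}$, $c_1=\omega_0^{(\alpha)}+\omega_2^{(\alpha)}$ and $c_k=\omega_{k+1}^{(\alpha)}$ for $k\ge 2$.
\begin{itemize}
\item For $(p,q)=(1,0)$ the samples are still negative. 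Here $c_0<0$, $c_1=\alpha(\alpha+2)(\alpha-1)/4>0$ and $c_k\ge 0$ for $k\ge 2$. So $f_{\alpha,M}(\theta)\le f_{\alpha,M}(0)=2\sum_{k=0}^{M}\omega_k^{(\alpha)}<0$ by Lemma~\ref{Lemma1}.
\item For $(1,-1)$ this argument fails, because $c_2=\omega_3^{(\alpha)}<0$.
\end{itemize}
What you can always use is $f_{\alpha,M}\le f_\alpha+2\sum_{k\ge M}|c_k|\le 2\sum_{k\ge M}|c_k|$, which gives $\lambda_{\min}(P)\ge 1-2\epsilon^2\gamma\sum_{k\ge M}|c_k|$.
\begin{itemize}
\item In the bounded regime this lower bound tends to $1$, which gives your $\|P^{-1}\|\le C$.
\item In the scaled regime it gives the positive definiteness of $\hat P$ needed above only after you check $\gamma\sum_{k\ge M}|c_k|\to 0$. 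With $\omega_k^{(\alpha)}=O(k^{-\alpha-1})$, this quantity is $O(h_t)$.
\end{itemize}
The paper also leaves this point implicit, but your proof explicitly rests on it.
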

\begin{proof}
The proof is a combination of Theorem \ref{Proposition3_2D_1} and of results in the literature. More precisely, by the proof of Theorem \ref{Proposition3_2D_1} and by the results on the $\tau$ approximation in \cite{SIMAX25-Rosita}, we know that 
\begin{equation*}
\{ A_{\alpha,M} - P_{\alpha,M}  \}_M,\ \{ \hat A_{\alpha,M} - \hat P_{\alpha,M}  \}_M   \sim_{\sigma,\rm GLT}  0
\end{equation*}
and
\begin{equation*}
\{A_{\boldsymbol{\alpha},\mathbf{m}} - P_{\boldsymbol{\alpha},\mathbf{m}}  \}_\mathbf{m}, \
\{\hat A_{\boldsymbol{\alpha},\mathbf{m}} - P_{\boldsymbol{\alpha},\mathbf{m}}  \}_\mathbf{m}  \sim_{\sigma,\rm GLT}  0.
\end{equation*}
Since all the involved matrices are Hermitian, by Axiom {\bf GLT 1} in \cite[Chapter 9, p. 170]{GLT-I} and by Axiom {\bf GLT 1} in \cite[Chapter 6, p. 118]{GLT-II}, we infer 
\begin{equation*}
\{ A_{\alpha,M} - P_{\alpha,M}  \}_M,\ \{ \hat A_{\alpha,M} - \hat P_{\alpha,M}  \}_M   \sim_{\lambda}  0
\end{equation*}
and
\begin{equation*}
\{A_{\boldsymbol{\alpha},\mathbf{m}} - P_{\boldsymbol{\alpha},\mathbf{m}}  \}_\mathbf{m}, \
\{\hat A_{\boldsymbol{\alpha},\mathbf{m}} - P_{\boldsymbol{\alpha},\mathbf{m}}  \}_\mathbf{m}  \sim_{\lambda}  0.
\end{equation*}
Now the part regarding the preconditioning, the claimed thesis can be obtained verbatim following \cite[Exercize 8.4, pp. 162,291,292]{GLT-I} for the unilevel case and \cite[Theorem 7.1, p. 122]{GLT-II} for the twolevel case.
\end{proof}

\section{Numerical Section}\label{sec:num}
In this section, we first present numerical examples to verify the effectiveness of the $\tau$-based preconditioner defined in the previous section when is combined with Conjugate Gradient (CG). Computations are performed using the built-in \textit{pcg} MATLAB function.

 Regarding the computational point of view, $P_{\alpha,M}$ is extremely suitable because the matrix-vector product can be performed in $O(M\log M)$ operations through the discrete sine transform (DST) algorithm. More precisely, the constant in the big $O$ can be precisely estimated and, following \cite{VanLoan}, the DST algorithm in terms of real operations costs around one half of the fast Fourier transform.

The stopping criterion is chosen as
\begin{equation*}
    \frac{\| r^{k}\|}{\| r^{0}\|}<10^{-8},
\end{equation*}
where $\|\cdot\|$ denotes the Euclidean norm, $r^{k}$ is the residual vector at the $k$-th iteration. The initial guess is fixed as the zero vector. All the numerical experiments are run by using MATLAB R2021a on HP 17-cp0000nl computer with configuration, AMD Ryzen 7 5700U with Radeon Graphics CPU and 16GB RAM.

\subsection{Preconditioning Results}
{\bf Problem 1:}\label{1Dex_Pre}
We consider the 1D-SFAC equation given in Eq. \eqref{eq:SFAC} on the domain $\Omega = \left(0, 1\right)$ with the initial condition
\begin{equation}
u(x,0)= x^{3}\left(1-x\right)^{3}.
\end{equation}
  \begin{table}[]
	\caption{$d=1,~N=2^{4},~\epsilon=0.3$, the average number of iterations of PCG for different values of $\alpha$.}
	\begin{small}
		\setlength{\tabcolsep}{9.4pt}
		\begin{center}
			\begin{tabular}{c c c c c c c c c c}
			\hline
			\hline
     \noalign{\vskip 1mm}\multirow{1}{*} {}&
				\multicolumn{1}{c}{${}$} &
                \multicolumn{2}{c}{$(p,q)=(1,0)$} &
                \multicolumn{2}{c}{$(p,q)=(1,-1)$} \\
                \cmidrule(r){3-4}\cmidrule(r){5-6}
				${}$&$M+1$& ${\mbox{CG}}$& ${P_{\alpha,M}}$&${\mbox{CG}}$& ${P_{\alpha,M}}$\\ \hline
    \noalign{\vskip 1mm}
          ${}$&${2^6   }$&${6  }$&${4}$&${10 }$&${4}$\\
          ${}$&${2^7   }$&${8  }$&${4}$&${13 }$&${5}$\\
          ${}$&${2^8   }$&${10 }$&${5}$&${19 }$&${5}$\\
${\alpha=1.2}$&${2^9   }$&${13 }$&${5}$&${28 }$&${5}$\\
          ${}$&${2^{10}}$&${19 }$&${5}$&${41 }$&${5}$\\
          ${}$&${2^{11}}$&${28 }$&${5}$&${63 }$&${6}$\\
          ${}$&${2^{12}}$&${43 }$&${6}$&${97 }$&${6}$\\
          ${}$&${2^{13}}$&${65 }$&${6}$&${149}$&${6}$\\
\hline
\noalign{\vskip 1mm}
          ${}$&${2^6   }$&${9  }$&${4}$&${12 }$&${4}$\\
          ${}$&${2^7   }$&${13 }$&${4}$&${18 }$&${5}$\\
          ${}$&${2^8   }$&${21 }$&${5}$&${29 }$&${5}$\\
${\alpha=1.5}$&${2^9   }$&${35 }$&${5}$&${49 }$&${5}$\\
          ${}$&${2^{10}}$&${58 }$&${5}$&${82 }$&${6}$\\
          ${}$&${2^{11}}$&${99 }$&${6}$&${140}$&${6}$\\
          ${}$&${2^{12}}$&${167}$&${6}$&${240}$&${6}$\\
          ${}$&${2^{13}}$&${287}$&${6}$&${411}$&${6}$\\
\hline
\noalign{\vskip 1mm}
          ${}$&${2^6   }$&${14  }$&${4}$&${15  }$&${4}$\\
          ${}$&${2^7   }$&${24  }$&${4}$&${27  }$&${4}$\\
          ${}$&${2^8   }$&${44  }$&${5}$&${49  }$&${5}$\\
${\alpha=1.8}$&${2^9   }$&${82  }$&${5}$&${91  }$&${5}$\\
          ${}$&${2^{10}}$&${155 }$&${5}$&${173 }$&${5}$\\
          ${}$&${2^{11}}$&${294 }$&${5}$&${328 }$&${5}$\\
          ${}$&${2^{12}}$&${558 }$&${6}$&${624 }$&${5}$\\
          ${}$&${2^{13}}$&${1062}$&${6}$&${1192}$&${6}$\\
\hline
			\end{tabular}
		\end{center}
	\end{small}
	\label{tab:T11}
\end{table}
\begin{figure}
		\centering
		\begin{subfloat}[$(p,q)=(1,0).$]
			{\resizebox*{7cm}{!}{\includegraphics[width=\textwidth]{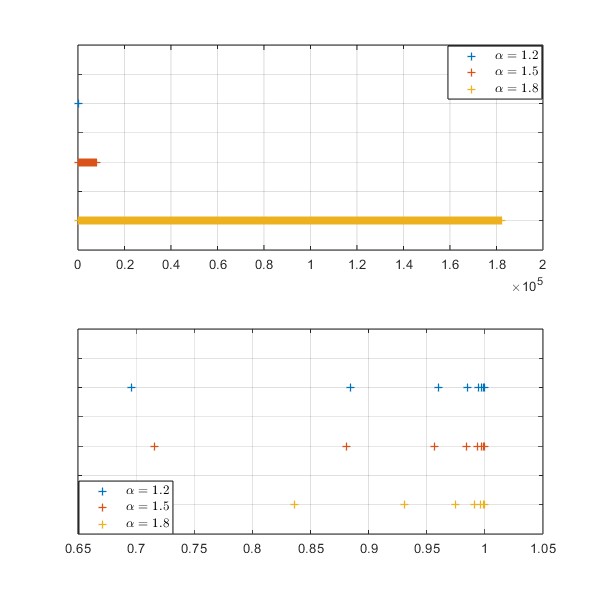}}}
		\end{subfloat}
		\begin{subfloat}[$(p,q)=(1,-1).$]
			{\resizebox*{7cm}{!}{\includegraphics[width=\textwidth]{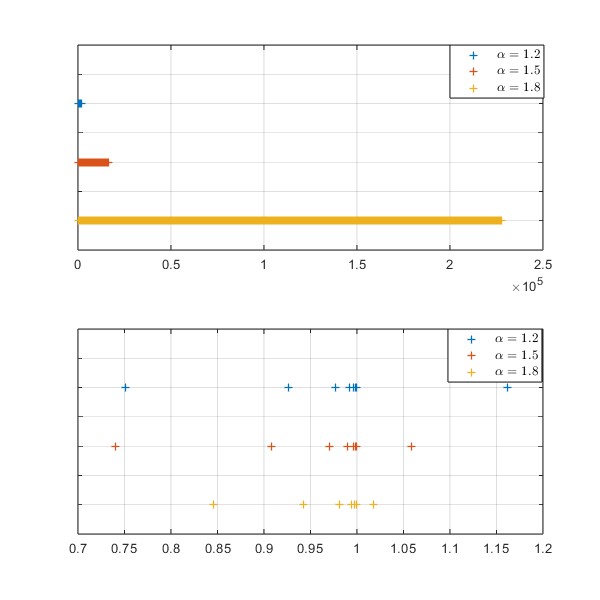}}}
		\end{subfloat}
		\caption{Spectral distributions of unpreconditioned coefficient matrices $A_{\alpha,M}$ and preconditioned $P_{\alpha,M}^{-1}A_{\alpha,M}$ of Example 2 for different fractional derivatious  $\alpha_{i}$ fixing $N=2^{4},~M=2^{13}$.}
	\label{fig: Spectral_analysis_1D}
	\end{figure}
{\bf Problem 2:}\label{2Dex1_Pre}
Consider the 2D-SFACH equation defined on $\Omega = \left(0, 1\right)^{2}$,
subject to the initial condition
\begin{equation}
u(x,y,0)=0.05\sin{(2\pi x)}\sin{(2\pi y)}.
\end{equation}
To demonstrate the efficiency of the proposed preconditioning technique, the PCG is applied as an iterative solver to solve both the 1D and 2D problems, with $T = 1,~N=2^{4}$ and $M_{1} = M_{2}$. The spectral distributions of the original coefficient matrix and the preconditioned one are shown in
Figs.~\ref{fig: Spectral_analysis_1D}-\ref{fig: Spectral_analysis_2D}, while the results in terms of the rounded average number of iterations are listed in Tables~\ref{tab:T11}-\ref{tab:T21}. As it can be observed, the preconditioning technique is not only optimal in terms of computational cost per iteration, but also the number of iterations is quite stable and is again optimal since tends to a constant independent of the space and time fineness parameter: this result is better than expected because the matrix structures are multilevel and because of the theoretical barriers proven in \cite{nega1,nega2}. 

  \begin{table}[]
	\caption{$d=2,~N=2^{4},~M_{1}=M_{2},~\epsilon=0.3$, the average number of iterations of PCG for different values of $\boldsymbol{\alpha}$.}
	\begin{small}
		\setlength{\tabcolsep}{9.4pt}
		\begin{center}
			\begin{tabular}{c c c c c c c c c c}
			\hline
			\hline
     \noalign{\vskip 1mm}\multirow{1}{*} {}&
				\multicolumn{1}{c}{${}$} &
                \multicolumn{2}{c}{$(p,q)=(1,0)$} &
                \multicolumn{2}{c}{$(p,q)=(1,-1)$} \\
                \cmidrule(r){3-4}\cmidrule(r){5-6}
				${(\alpha,\beta)}$&$M1+1$& ${\mbox{CG}}$& ${P_{\boldsymbol{\alpha},\mathbf{m}}}$&${\mbox{CG}}$& ${P_{\boldsymbol{\alpha},\mathbf{m}}}$\\ \hline
    \noalign{\vskip 1mm}
                    ${}$&${2^5   }$&${7  }$&${5}$&${10}$&${5}$\\
                    ${}$&${2^6   }$&${9  }$&${5}$&${14}$&${5}$\\
                    ${}$&${2^7   }$&${11 }$&${6}$&${22}$&${6}$\\
${\left(1.1,1.3\right)}$&${2^8   }$&${16 }$&${6}$&${33}$&${6}$\\
                    ${}$&${2^9   }$&${24 }$&${7}$&${51}$&${7}$\\
                    ${}$&${2^{10}}$&${37 }$&${7}$&${79}$&${8}$\\

\hline
\noalign{\vskip 1mm}
                    ${}$&${2^5   }$&${10 }$&${5}$&${11 }$&${5}$\\
                    ${}$&${2^6   }$&${15 }$&${5}$&${19 }$&${5}$\\
                    ${}$&${2^7   }$&${26 }$&${6}$&${33 }$&${6}$\\
${\left(1.5,1.6\right)}$&${2^8   }$&${43 }$&${6}$&${57 }$&${6}$\\
                    ${}$&${2^9   }$&${74 }$&${7}$&${100}$&${6}$\\
                    ${}$&${2^{10}}$&${130}$&${7}$&${175}$&${7}$\\
\hline
\noalign{\vskip 1mm}
                    ${}$&${2^5   }$&${14 }$&${5}$&${15 }$&${5}$\\
                    ${}$&${2^6   }$&${26 }$&${5}$&${28 }$&${5}$\\
                    ${}$&${2^7   }$&${48 }$&${6}$&${52 }$&${6}$\\
${\left(1.7,1.9\right)}$&${2^8   }$&${93 }$&${6}$&${101}$&${6}$\\
                    ${}$&${2^9   }$&${181}$&${6}$&${196}$&${6}$\\
                    ${}$&${2^{10}}$&${355}$&${7}$&${383}$&${7}$\\
\hline
\noalign{\vskip 1mm}
                    ${}$&${2^5   }$&${14 }$&${4}$&${15 }$&${5}$\\
                    ${}$&${2^6   }$&${25 }$&${5}$&${27 }$&${5}$\\
                    ${}$&${2^7   }$&${48 }$&${5}$&${52 }$&${5}$\\
${\left(1.1,1.9\right)}$&${2^8   }$&${95 }$&${6}$&${100}$&${6}$\\
                    ${}$&${2^9   }$&${188}$&${6}$&${201}$&${6}$\\                ${}$&${2^{10}}$&${375}$&${7}$&${399}$&${6}$\\
\hline

            \end{tabular}
		\end{center}
	\end{small}
	\label{tab:T21}
\end{table}
\begin{figure}
		\centering
		\begin{subfloat}[$(p,q)=(1,0).$]
			{\resizebox*{7cm}{!}{\includegraphics[width=\textwidth]{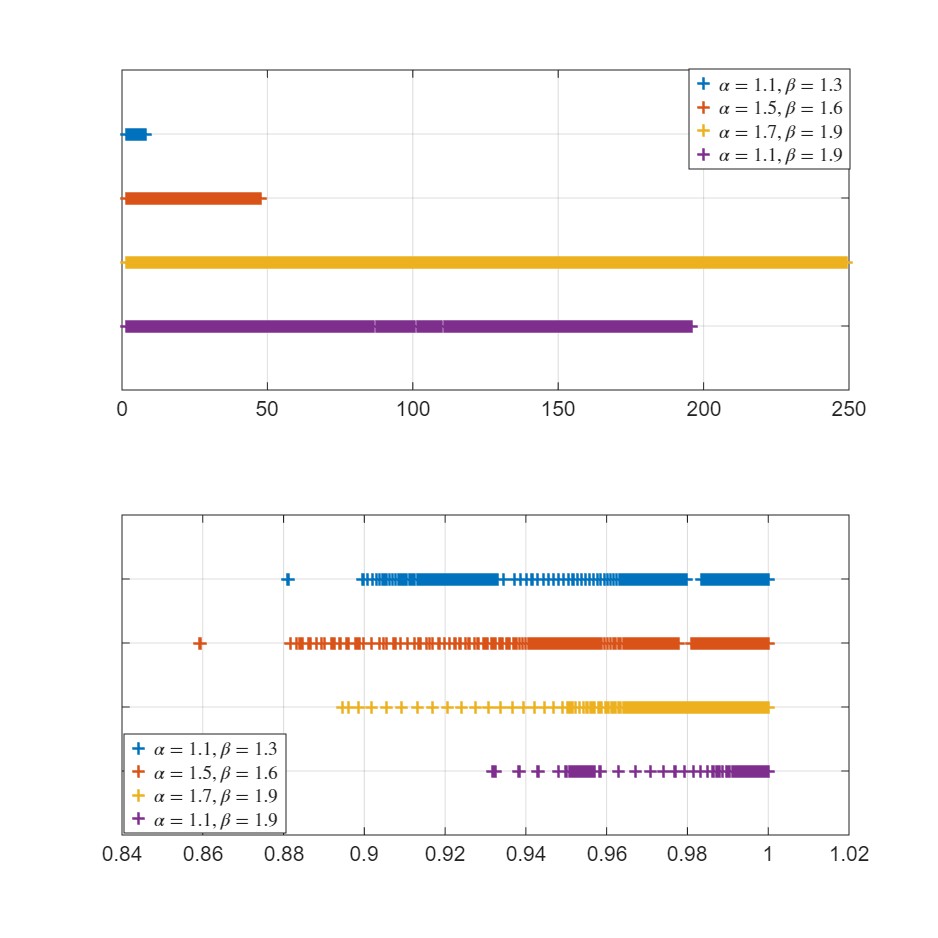}}}
		\end{subfloat}
		\begin{subfloat}[$(p,q)=(1,-1).$]
			{\resizebox*{7cm}{!}{\includegraphics[width=\textwidth]{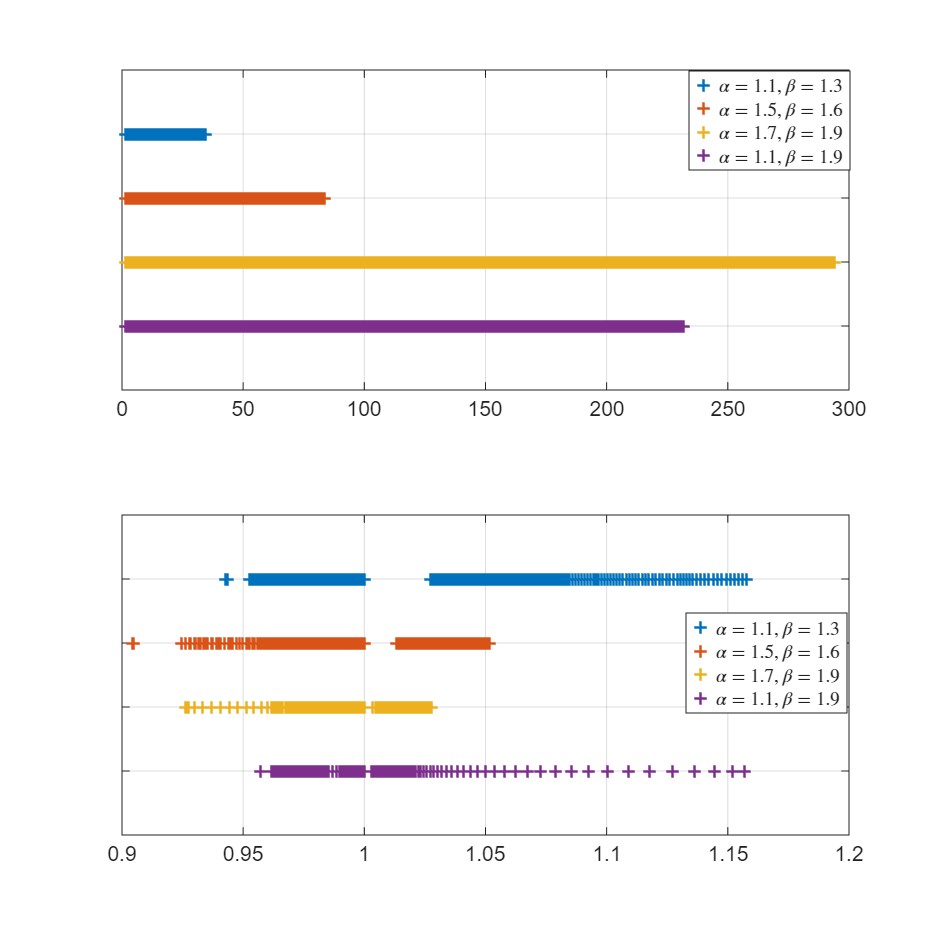}}}
		\end{subfloat}
		\caption{Spectral distributions of unpreconditioned coefficient matrices $A_{\boldsymbol{\alpha},\mathbf{m}}$ and preconditioned ones $P_{\boldsymbol{\alpha},\mathbf{m}}^{-1}A_{\boldsymbol{\alpha},\mathbf{m}}$ of Example 2 for different $\boldsymbol{\alpha}$ values, fixing $N=2^{4},~M_{1}=M_{2}=2^{7}$.}
	\label{fig: Spectral_analysis_2D}
	\end{figure}
In the following, we discuss the numerical results in both 1D and 2D obtained using the proposed stabilized scheme \cite{zhang2025efficient}. The effect of the fractional order on the interface evolution is studied extensively. For all the problems, we use zero Dirichlet boundary conditions.

{\bf Problem 3:}\label{ex1}
In this example we study the effect of fractional order on the interface profile. We consider the 1D-SFAC equation given in Eq. \eqref{eq:SFAC} with the initial condition
\begin{equation}
u(x,0)= 0.5\sin{2\pi x}, \quad \quad \Omega = \left(0, 1\right).
\end{equation}
The grid points along the spatial axis are $M = 256$ and the final simulation time is taken as $T = 12$ with $N=128$ grid points along the temporal direction. The other parameter values are $\epsilon=0.02$. The numerical results obtained for different fractional orders $\alpha$ are given in Fig. \ref{fig1:main}. It is evident from Fig. \ref{subfig:fig1a} and \ref{subfig:fig1b} that the interface profile become very sharp when $\alpha$ is smaller and more diffusive when it is larger as depicted from Fig. \ref{subfig:fig1c} and \ref{subfig:fig1d}. The energy profile for different fractional order is given in Fig. \ref{subfig:fig2a}. It is observed that the energy profile is dissipative and behaves almost the same for all fractional order. The maximum value of the solution profile over time for all fractional orders is given in Fig. \ref{subfig:fig2b} which demonstrate that the maximum value is bounded by $0.9575$ which is in accordance with the theoretical results given in \cite{zhang2025efficient}.\\
\begin{figure}[ht!]
\centering
\subfloat[\label{subfig:fig1a}]{\includegraphics[width=0.5\linewidth]{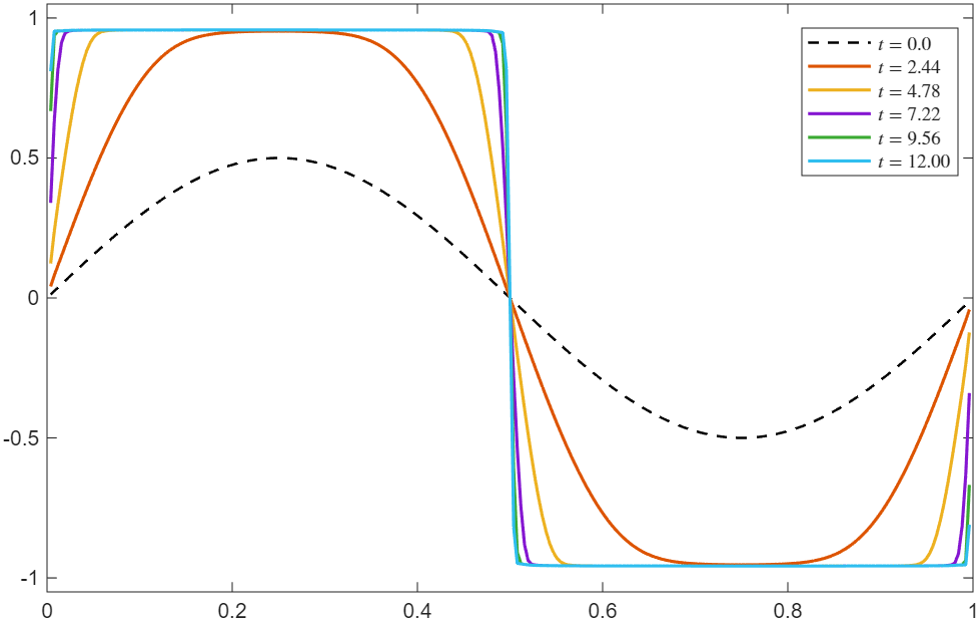}}\hfil
\subfloat[\label{subfig:fig1b}]{\includegraphics[width=0.5\linewidth]{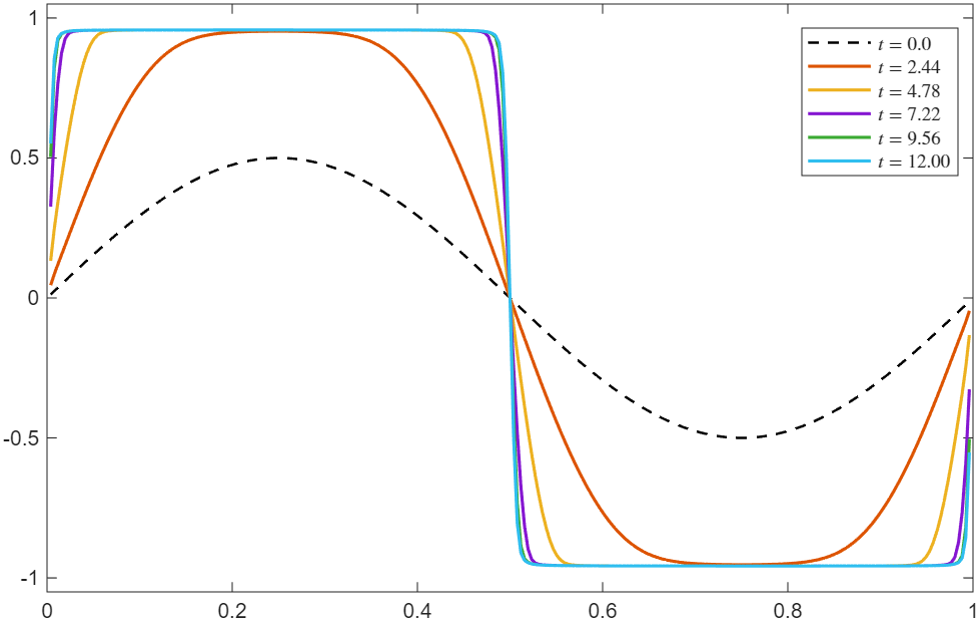}}\par
\subfloat[\label{subfig:fig1c}]{\includegraphics[width=0.5\linewidth]{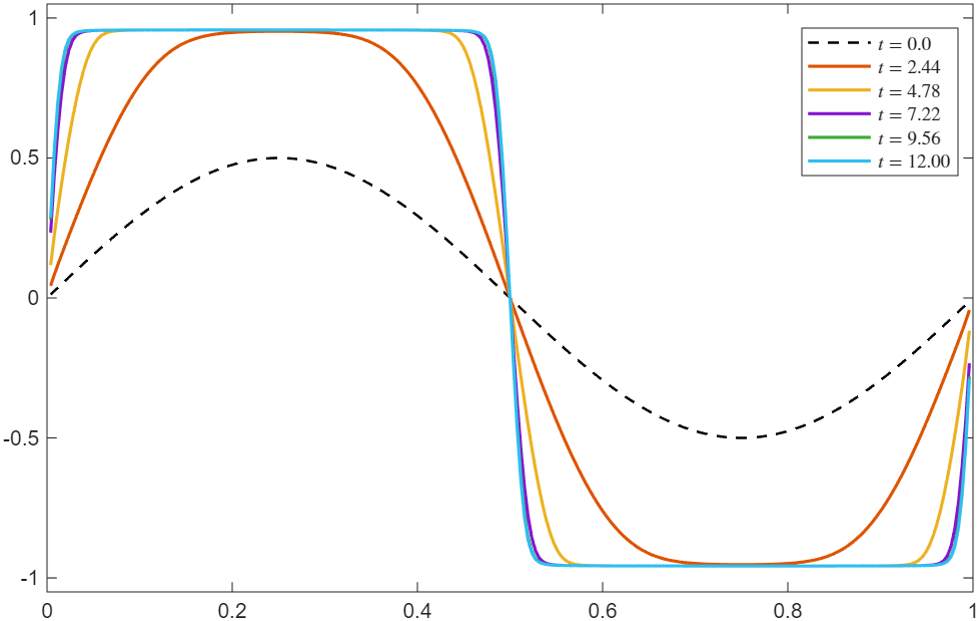}}\hfil
\subfloat[\label{subfig:fig1d}]{\includegraphics[width=0.5\linewidth]{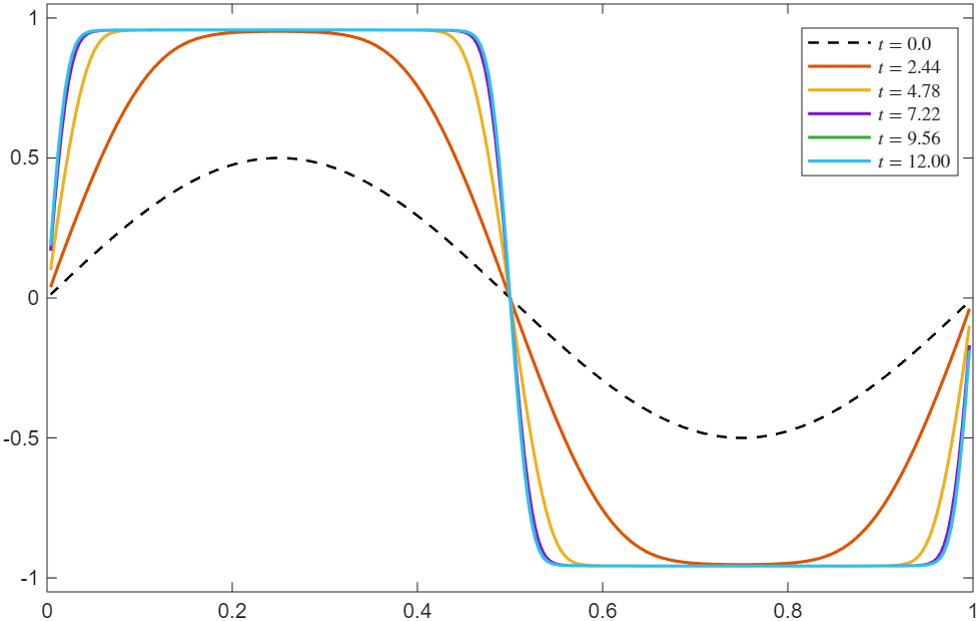}}
\caption{Solution profile for problem 3 when (a) $\alpha$= 1.25 (b) $\alpha$= 1.5 (c) $\alpha$= 1.75 (d) $\alpha$= 1.9.}
\label{fig1:main}
\end{figure}
\begin{figure}[ht!]
\centering
\subfloat[\label{subfig:fig2a}]{\includegraphics[width=0.5\linewidth]{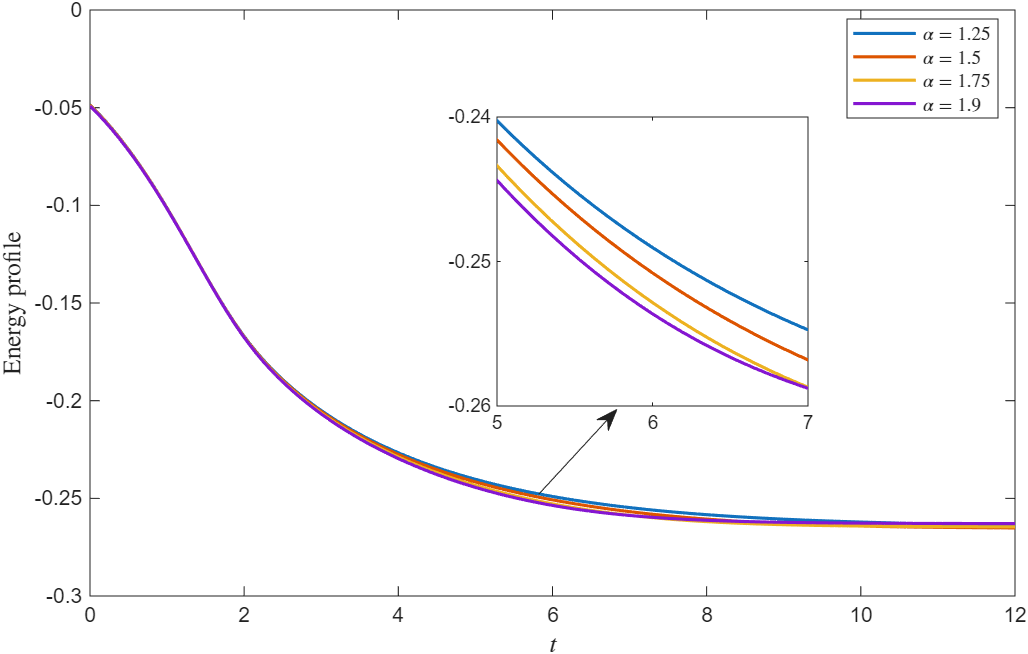}}\hfil
\subfloat[\label{subfig:fig2b}]{\includegraphics[width=0.5\linewidth]{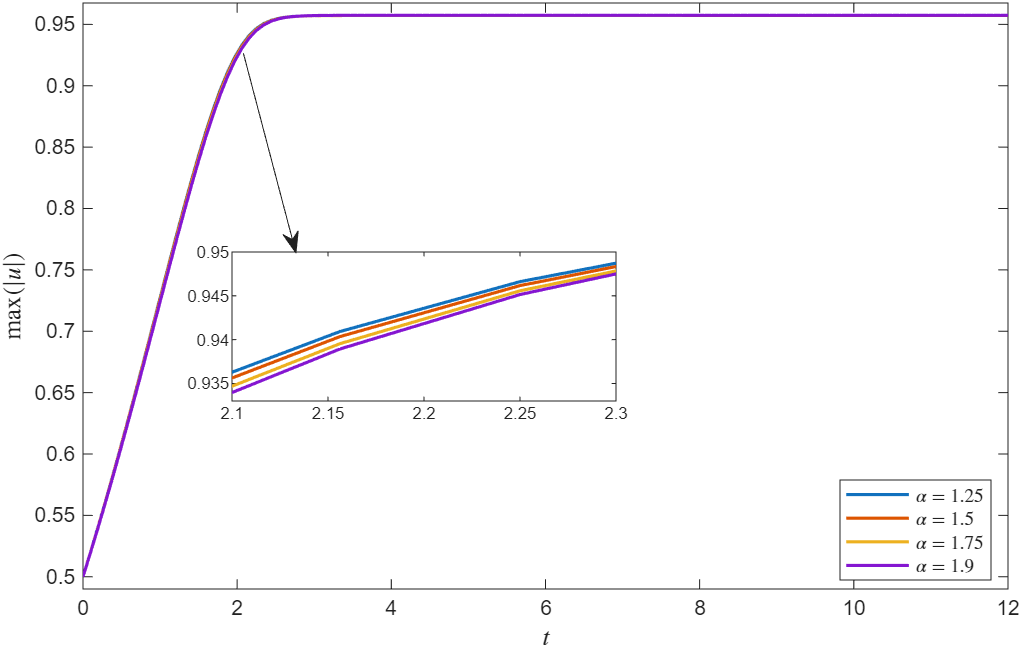}}
\caption{(a) Energy profile for problem 3 (b) Maximum value of the solution with respect to time for problem 3.}
\label{fig2:main}
\end{figure}
{\bf Problem 4:}\label{ex2}
In this problem, we consider the dynamics of the star shape given by the following initial condition
\begin{equation*}
u(x,y,0)=
\begin{cases}
\;\;0.99,
& \displaystyle -\tanh\!\left(\frac{C(x,y)}{E}\right) > 0.99, \\[10pt]
-0.99,
& \displaystyle -\tanh\!\left(\frac{C(x,y)}{E}\right) < -0.99, \\[10pt]
\displaystyle -\tanh\!\left(\frac{C(x,y)}{E}\right),
& \text{otherwise},
\end{cases}
\end{equation*}
where
\[
C(x,y)
=
\min\!\Big\{
\max\big(|2x-1|,\;|y-0.5|\big),\;
\max\big(|x-0.5|,\;|2y-1|\big)
\Big\}
-0.2,
\]
and $E=\sqrt{2}\epsilon$. The domain for this problem is a unit square. The number of grid points along both spatial directions are $M_{1} = M_{2} = 128$, while the final simulation is taken as $T = 60$ and the other parameter values are $\alpha=\beta$, $\epsilon = 0.02$ and $h_t = 0.06$. The dynamics of the interface for different values of $\alpha$ are given in Fig. \ref{fig7:main}. The first row of Fig. \ref{fig7:main} shows the results for $\alpha = 1.3$ indicating the dynamics are slow and the interface profile is very sharp. The second row of Fig. \ref{fig7:main} depicts the results for $\alpha = 1.5$ which reveal that the dynamics are now faster than that with $\alpha = 1.3$ and the interface profile is a bit diffusive. The last row of Fig. \ref{fig7:main} gives the results for $\alpha = 1.8$ which show that the interface profile is more diffusive compared to $\alpha = 1.3, 1.5$ and the dynamics are also faster as compared to other values of the fractional order.\\
\begin{figure}[ht!]
\centering
\subfloat[$t=0.06$\label{subfig:fig7a}]{\includegraphics[width=0.18\linewidth]{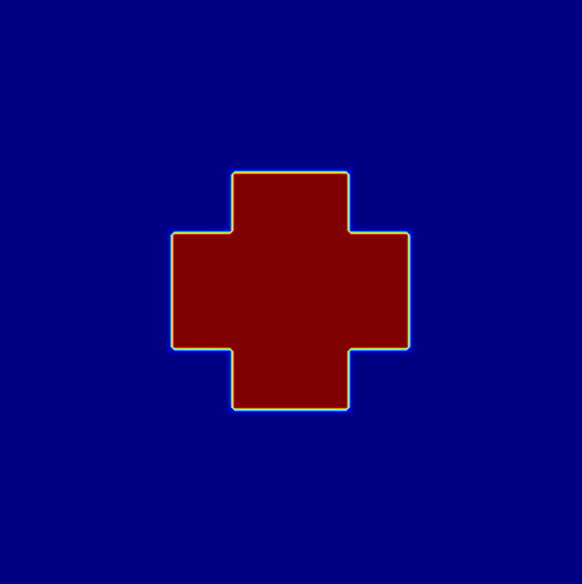}}\hfil
\subfloat[$t=19.98$\label{subfig:fig7b}]{\includegraphics[width=0.18\linewidth]{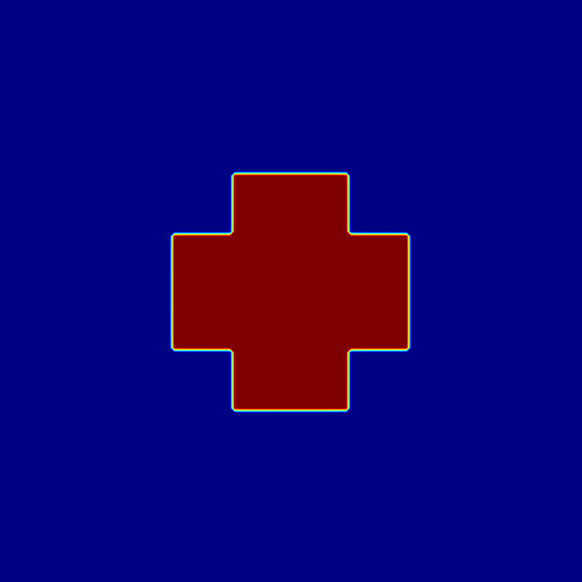}}\hfil
\subfloat[$t=40.02$\label{subfig:fig7c}]{\includegraphics[width=0.18\linewidth]{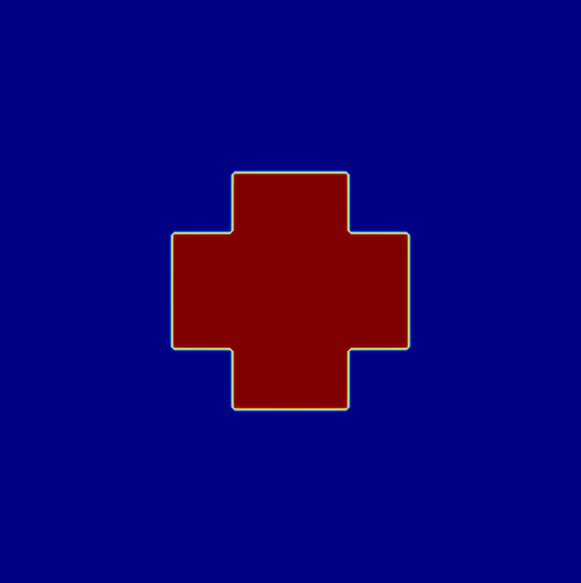}}\hfil
\subfloat[$t=60$\label{subfig:fig7d}]{\includegraphics[width=0.18\linewidth]{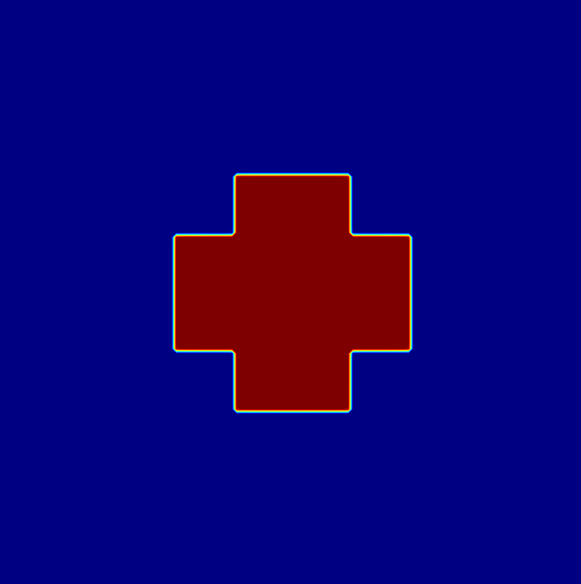}}\par
\subfloat[$t=0.06$\label{subfig:fig7e}]{\includegraphics[width=0.18\linewidth]{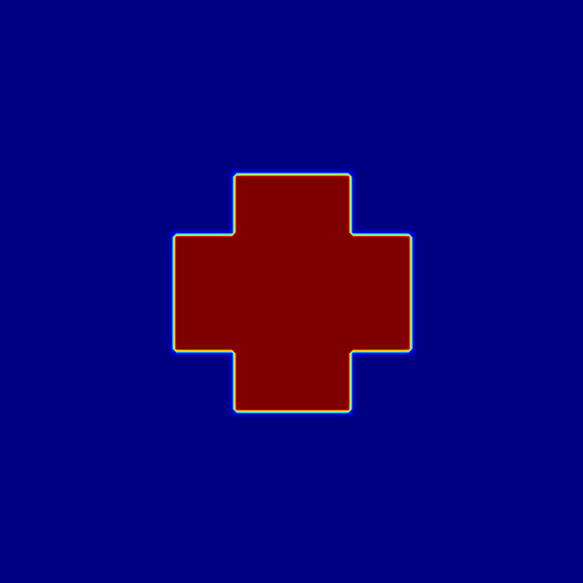}}\hfil
\subfloat[$t=19.98$\label{subfig:fig7f}]{\includegraphics[width=0.18\linewidth]{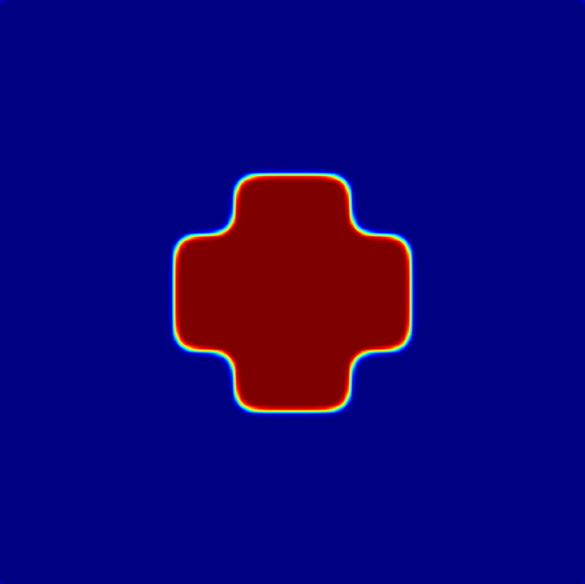}}\hfil
\subfloat[$t=40.02$\label{subfig:fig7g}]{\includegraphics[width=0.18\linewidth]{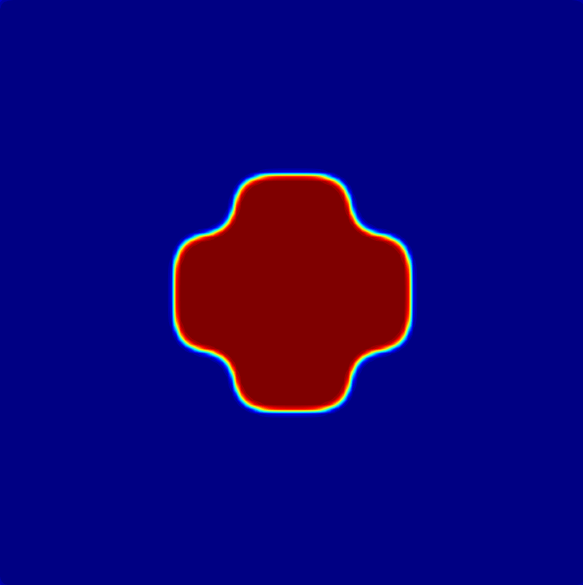}}\hfil
\subfloat[$t=60$\label{subfig:fig7h}]{\includegraphics[width=0.18\linewidth]{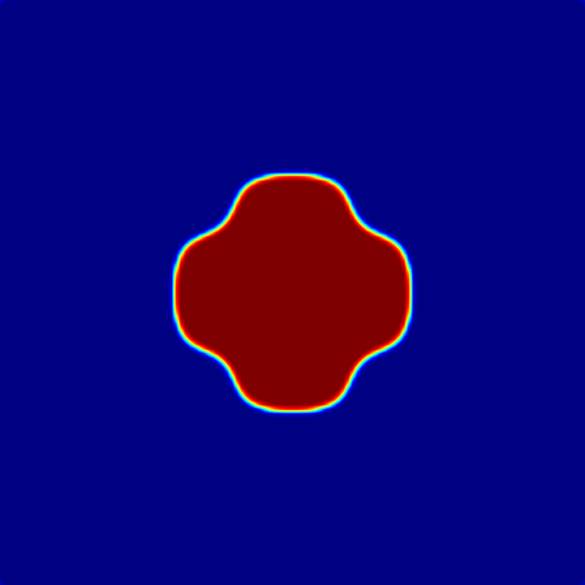}}\par
\subfloat[$t=0.06$\label{subfig:fig7i}]{\includegraphics[width=0.18\linewidth]{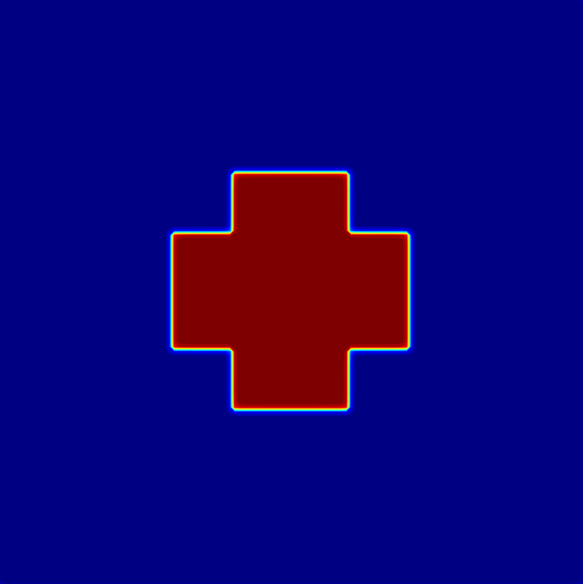}}\hfil
\subfloat[$t=19.98$\label{subfig:fig7j}]{\includegraphics[width=0.18\linewidth]{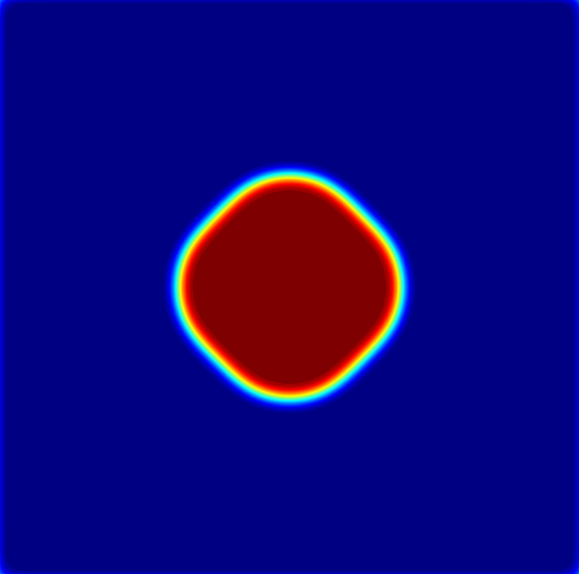}}\hfil
\subfloat[$t=40.02$\label{subfig:fig7k}]{\includegraphics[width=0.18\linewidth]{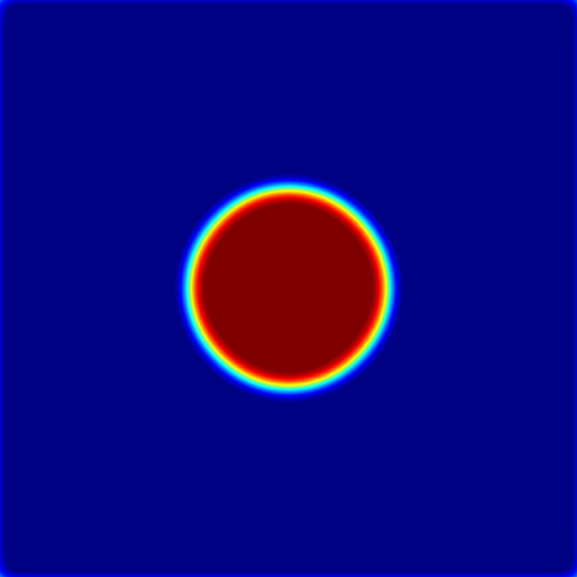}}\hfil
\subfloat[$t=60$\label{subfig:fig7l}]{\includegraphics[width=0.18\linewidth]{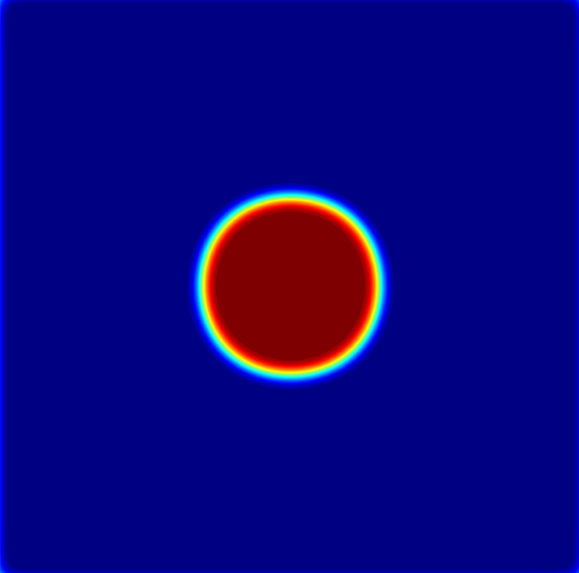}}
\caption{Dynamic of the star shape for (i) $\alpha=1.3$ (first row) (ii) $\alpha=1.5$ (second row) (iii) $\alpha=1.8$ (third row)}
\label{fig7:main}
\end{figure}
{\bf Problem 5:}
In this problem,  we have investigated the effect of fractional order on the interface profile and overall morphology evolution in a well-known phenomenon of material science known as spinodal decomposition. In this process, initially a mixture is taken, and it eventually separates into different phases, forming complex interface evolution over time. The initial condition is generated using random numbers given by
\begin{equation*}
u(x,y,0) = 0.01(2\text{rand}(x, y)-1).
\end{equation*}
The domain is taken as a unit square discretized uniformly along both spatial directions with $M_{1} = M_{2}= 128$ and the final simulation time for this problem is also $T= 40$ with $N = 1000$ grid points in the temporal direction. The interface width is $\epsilon = 0.02$, while the fractional derivatives satisfy $\alpha=\beta$. The dynamics of the interface profile for different fractional orders, i.e., $\alpha = 1.3, 1.5, 1.8$ are plotted in Fig. \ref{fig8:main}. Fig. \ref{subfig:fig8a}-\ref{subfig:fig8d} shows the morphology evolution for $\alpha = 1.3$ at different times, showing that the interface profile is very sharp with overall slow dynamics. Moreover, the structure is more dense and heterogeneous for this smaller value of the fractional order. The second row of Fig. \ref{fig8:main} depicts the results for $\alpha = 1.5$ which demonstrate that the dynamics are fast and the interface profile is diffusive as compared to $\alpha = 1.3$. Similarly, the last row of Fig. \ref{fig8:main} shows the evolution of the interface profile for $\alpha = 1.8$. It is clearly observed that the interface is more diffusive and the dynamics of the overall problem are fast. Furthermore, the structure is less heterogeneous as compared to $\alpha = 1.3, 1.5$. The numerical results plotted in Fig. \ref{fig8:main} are consistent with the results reported in \cite{sohaib2024sfac}.
\begin{figure}[ht!]
\centering
\subfloat[$t=0.06$\label{subfig:fig8a}]{\includegraphics[width=0.18\linewidth]{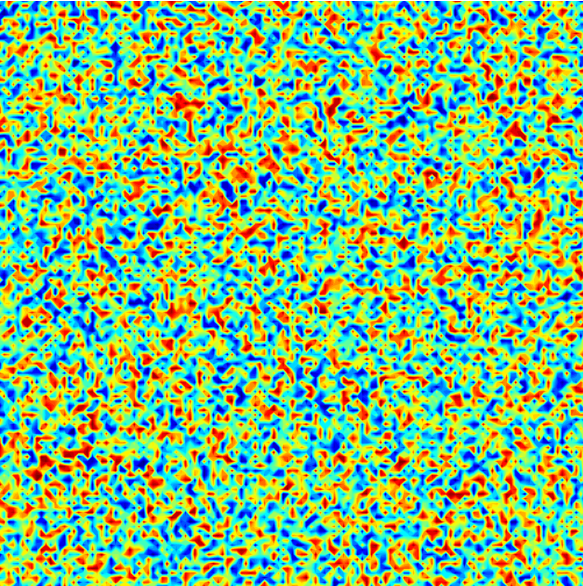}}\hfil
\subfloat[$t=19.98$\label{subfig:fig8b}]{\includegraphics[width=0.18\linewidth]{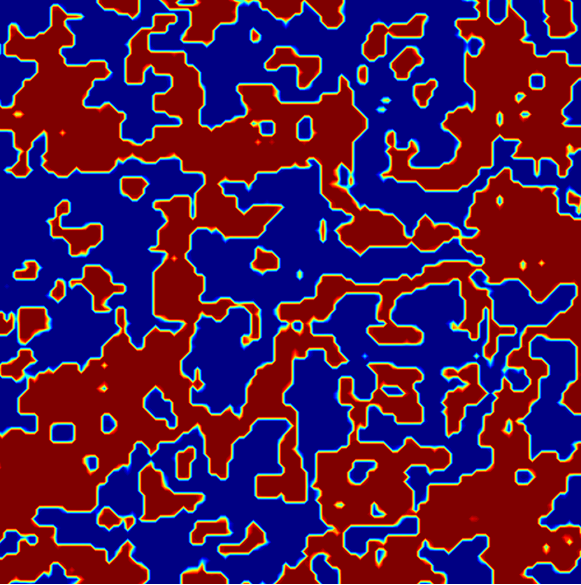}}\hfil
\subfloat[$t=40.02$\label{subfig:fig8c}]{\includegraphics[width=0.18\linewidth]{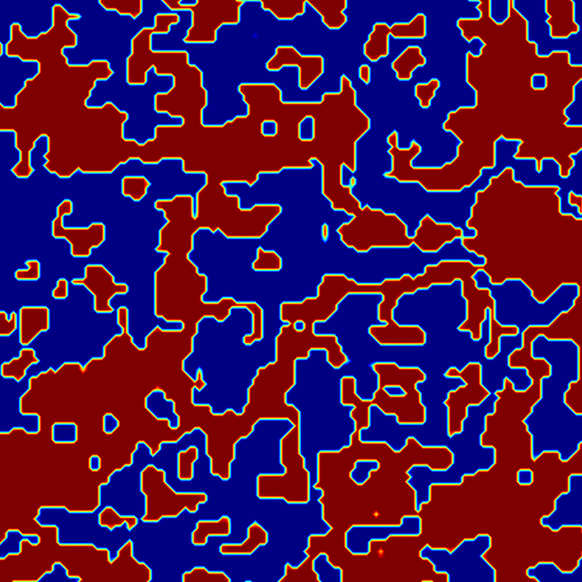}}\hfil
\subfloat[$t=60$\label{subfig:fig8d}]{\includegraphics[width=0.18\linewidth]{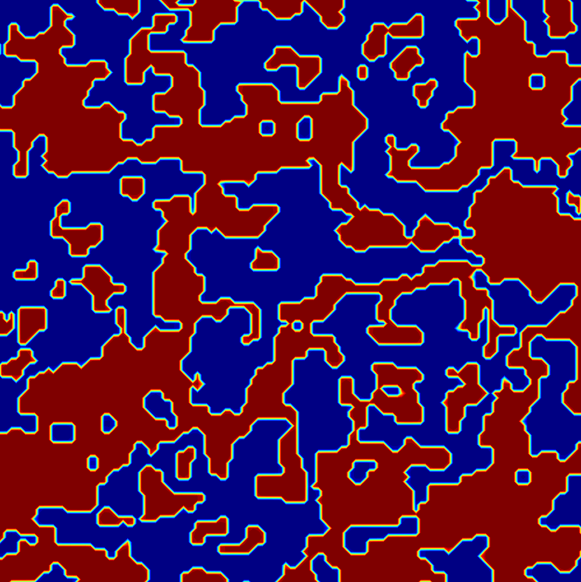}}\par
\subfloat[$t=0.06$\label{subfig:fig8e}]{\includegraphics[width=0.18\linewidth]{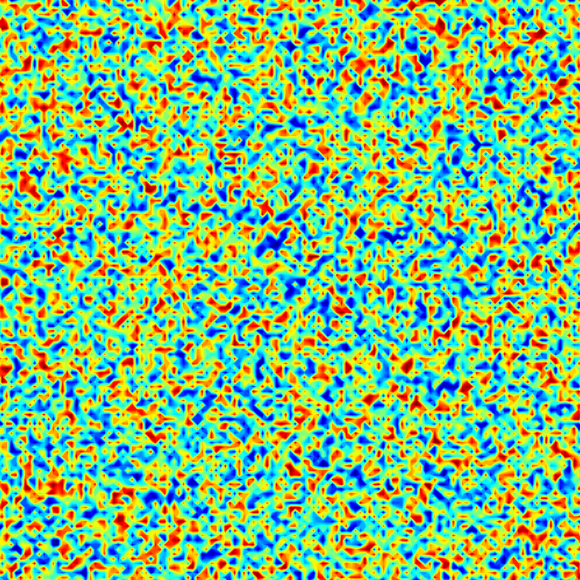}}\hfil
\subfloat[$t=19.98$\label{subfig:fig8f}]{\includegraphics[width=0.18\linewidth]{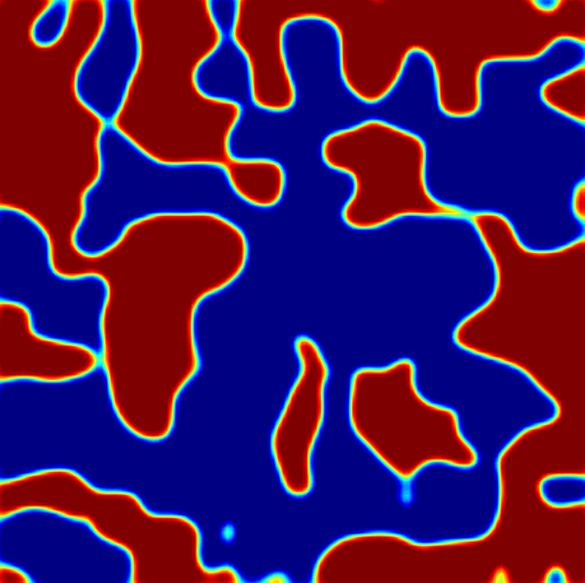}}\hfil
\subfloat[$t=40.02$\label{subfig:fig8g}]{\includegraphics[width=0.18\linewidth]{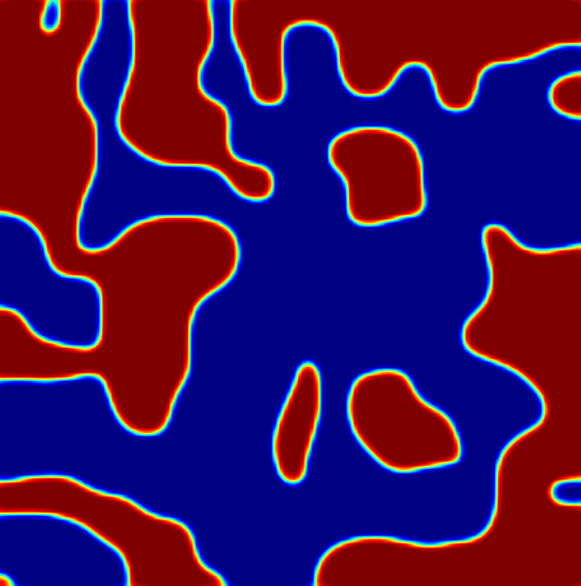}}\hfil
\subfloat[$t=60$\label{subfig:fig8h}]{\includegraphics[width=0.18\linewidth]{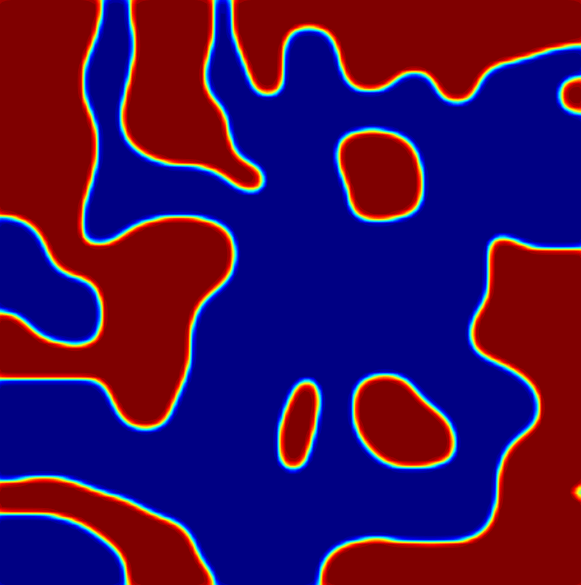}}\par
\subfloat[$t=0.06$\label{subfig:fig8i}]{\includegraphics[width=0.18\linewidth]{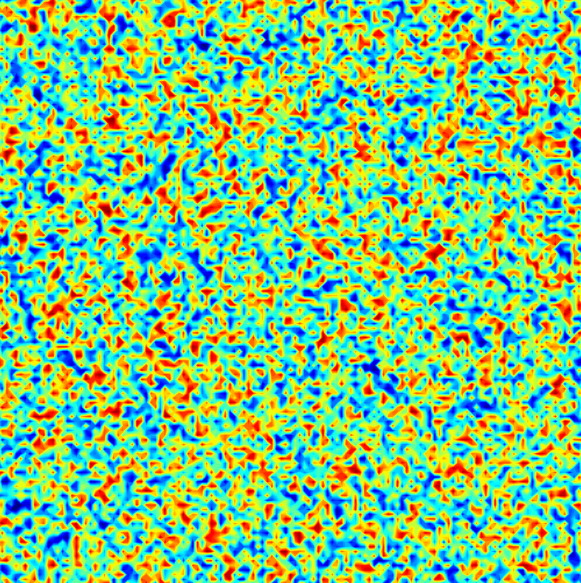}}\hfil
\subfloat[$t=19.98$\label{subfig:fig8j}]{\includegraphics[width=0.18\linewidth]{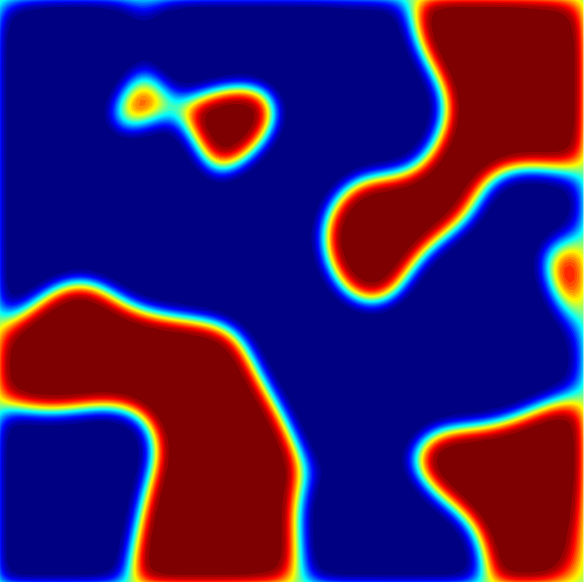}}\hfil
\subfloat[$t=40.02$\label{subfig:fig8k}]{\includegraphics[width=0.18\linewidth]{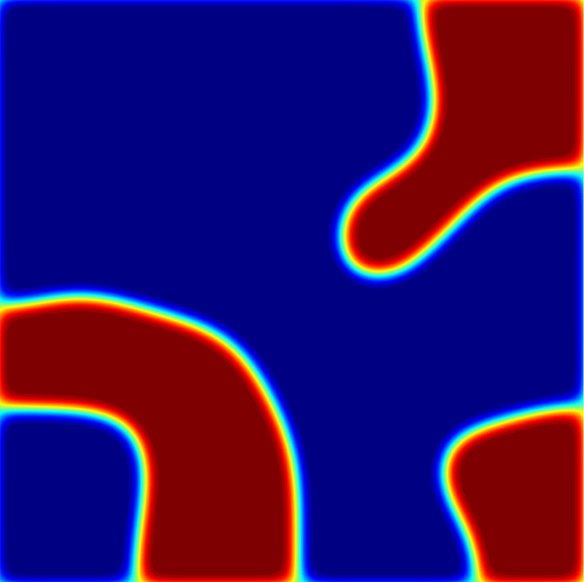}}\hfil
\subfloat[$t=60$\label{subfig:fig8l}]{\includegraphics[width=0.18\linewidth]{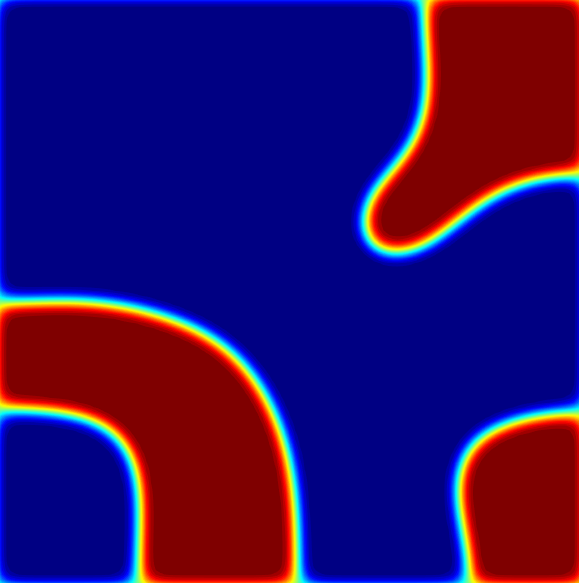}}
\caption{The dynamics of the interface in spinodal decomposition phenomena for  (i) $\alpha=1.3$ (first row) (ii) $\alpha=1.5$ (second row) (iii) $\alpha=1.8$ (third row)}
\label{fig8:main}
\end{figure}

\section{Conclusions}\label{sec:end}

We have considered an initial boundary value problem of the SFAC equation with logarithmic Flory-Huggins potential, which has been approximated by first-order weighted and shifted Gr\"unwald difference formulae. We provided localization and distribution results for the underlying coefficient and preconditioned matrix-sequences, where the design of proper preconditioners was based on the spectral information. Numerical results have been presented and have been critically discussed. In particular, in terms of convergence speed of preconditioned conjugate gradient method, we observe optimality and this is better than expected because the matrix structures are multilevel and because of the theoretical barriers proven in \cite{nega1,nega2}. For understanding this phenomenon, as future work we plan to study
\begin{itemize}
\item the number of the outliers as a function of the fineness parameters,
\item their asymptotic behavior and in particular their asymptotic separation from zero. 
\end{itemize}
Finally, given its importance in terms of modeling real-world phenomena, a further direction will be the analysis of the tempered SFAC equation, which generalizes the problem considered in the present work (see \cite{ahmad2025smoothing} and references therein).

\bibliographystyle{ieeetr}
\bibliography{Bibtex}
\end{document}